\DeclareRobustCommand{\VAN}[3]{#2}
\newtheorem{theorem}{Theorem}
\newtheorem{lemma}[theorem]{Lemma}
\newtheorem{proposition}[theorem]{Proposition}
\newenvironment{proof}[1][Proof]{\par\noindent\textbf{#1.} }{\ \rule{0.5em}{0.5em}\par}
\newcommand{\smbp}{SmBP}
\newcommand{\extra}{\mathcal{R}}
\newcounter{HPcont} 
\renewcommand{\theHPcont}{{(A-\arabic{HPcont})}}
\newenvironment{assume}{
	\begin{itemize}\refstepcounter{HPcont} \item[\!\!\!\!\!\!\!\!\!\!\textbf{\theHPcont}]} {\end{itemize}\par}
\newcounter{HPcontALDO}   
\renewcommand{\theHPcontALDO}{{(B-\arabic{HPcontALDO})}}
\newenvironment{assumeALDO}{
	\begin{itemize}\refstepcounter{HPcontALDO} \item[\!\!\!\!\!\!\!\!\!\!\textbf{\theHPcontALDO}]} {\end{itemize}\par}
\newcounter{Discussion}
\renewcommand{\theDiscussion}{{D.\arabic{Discussion}}}
\newcommand{\discussion}{
	\refstepcounter{Discussion}\noindent\textbf{\theDiscussion{ }--{ }}
}
\begin{document}

\title{Some Insights About the Small Ball Probability Factorization for Hilbert Random Elements}
\author{Enea~G.~Bongiorno, Aldo~Goia \\
Universit\`{a} del Piemonte Orientale, \\
enea.bongiorno@uniupo.it, aldo.goia@uniupo.it }
\maketitle

\abstract{
Asymptotic factorizations for the small--ball probability (\smbp{}) of a Hilbert valued random element $X$ are rigorously established and discussed. 
In particular, given the first $d$ principal components (PCs) and as the radius $\varepsilon$ of the ball tends to zero, the \smbp{} is asymptotically proportional to (a) the joint density of the first $d$ PCs, (b) the volume of the $d$--dimensional ball with radius $\varepsilon$, and (c) a correction factor weighting the use of a truncated version of the process expansion. 
Moreover, under suitable assumptions on the spectrum of the covariance operator of $X$ and as $d$ diverges to infinity when $\varepsilon$ vanishes, some simplifications occur. In particular, the \smbp{} factorizes asymptotically as the product of the joint density of the first $d$ PCs and a pure volume parameter.
All the provided factorizations allow to define a \textit{surrogate intensity} of the \smbp{} that, in some cases, leads to a genuine \textit{intensity}.
To operationalize the stated results, a non--parametric estimator for the \textit{surrogate intensity} is introduced and it is proved that the use of estimated PCs, instead of the true ones, does not affect the rate of convergence. Finally,  as an illustration, simulations in controlled frameworks are provided.
}

\noindent \textbf{Keywords.} Hilbert functional data; Small Ball Probability; Karhunen--Lo\`{e}ve decomposition; kernel density estimate.


\section*{Introduction}

For a random element $X$ valued in a general metric space, the measure of how it concentrates over such space plays a central role in statistical analysis. If $X$ is a real random vector, its joint density is, in a natural way, that measure.
In fact, in practical situations, the density is helpful in defining mixture models, in detecting latent structure, in discriminant analysis, in robust statistics to identify outliers and so on.
When observed data are curves, surfaces, images, objects or, briefly, \emph{functional data} (see e.g.~monographs \citealp{fer:vie06, hor:kok12, ram:sil05}, and \citealp{bon:goi:sal:vie14} for recent contributions), the dimensionality of the space to which the data belong raises problems in defining an object that plays the same role of the joint density distribution in the multivariate context. The main problem is that, without an underlying dominant probability measure, the Radon--Nikodym derivative can not be straightforwardly applied.
To manage this, a concept of ``surrogate density'' is derived from the notion of small--ball probability (\smbp{} in the sequel) of a random element $X$, that is
\begin{equation*}
\varphi \left( x,\varepsilon \right) =\mathbb{P}\left( \Delta(X,x) <\varepsilon \right) ,
\end{equation*}%
where $x$ is in the same space where $X$ takes its values, $\varepsilon$ is a positive real number and $\Delta$ is a semi--metric. 
The behaviour, as $\varepsilon $ vanishes, of $\varphi \left( x,\varepsilon \right)$ provides information about the way in which $X$ concentrates at $x$. Such feature stimulates the study of the \smbp{} in different settings: the limiting behaviour has been developed from a theoretical point of view (for instance, refer to the small tails/deviations theory \citealp{li:sha01,lif12} and references therein), in functional statistics the \smbp{} was used to derive asymptotics in mode estimations (see, e.g.\@ \citealp{dab:fer:vie07,del:hal10,fer:kud:vie12,gas:hal:pre98}), as well as in non--parametric regression literature in evaluating the rate of convergence of estimators (see, e.g.\@ \citealp{fer:vie06,fer:mas:vie07}). 
\\%
Often, the necessity to have available a surrogate density for $X$ has brought to assume (as done, for instance, in \citealp{fer:kud:vie12, gas:hal:pre98}) that
\begin{equation}
\varphi (x,\varepsilon )=\Psi \left( x\right) \phi \left( \varepsilon
\right) +o\left( \phi \left( \varepsilon \right) \right) ,\qquad \varepsilon
\rightarrow 0,  \label{eq:SmBP_factorization}
\end{equation}%
where  $\Psi $, depending only on the center $x$, plays the role of the \emph{surrogate density} of the random element $X$, whilst  $\phi \left( \varepsilon \right) $ is a kind of ``volume parameter'' which does not depend on the spatial term.
It is worth noticing that $\Psi$ is also the \emph{intensity} of the \smbp{}.
\\%
Although to break the dependence on $x$ and $\varepsilon$ supplies a clear modelling advantage and the existence of $\Psi\left(x\right)$ is desirable (especially from a statistical perspective), factorization \eqref{eq:SmBP_factorization} can be derived only in particular settings. Notable examples are the case of Gaussian processes (e.g.\@ \citealp{li:sha01, lif12} and references therein) and the one of fractal processes for suitable semi--norms $\Delta$ (e.g.\@ \citealp[Chapter~13]{fer:vie06}).
Hence, a crucial task is to study some asymptotic factorizations of the \smbp{} that lead to a definition of its \textit{intensity} or, at least, when it is not possible to completely isolate the dependence on $x$ and $\varepsilon$, a \textit{surrogate intensity}. 
\\ %
In what follows, we assume $X$ to be a random element in an Hilbert space with $\Delta$ being the metric induced by the Hilbert norm and, without loss of generality, we deal with random curves on the space of square integrable functions on the unit interval. A first factorization of the \smbp{} that allows to define a \textit{surrogate intensity} was provided by \cite{del:hal10}: besides some technical hypothesis, on the spectrum of the covariance operator of $X$ and assuming that principal components of $X$ are independent with positive and sufficiently smooth marginal density functions $\{\tilde{f}_{j}\}$, the authors showed that
\begin{equation}
\varphi (x,\varepsilon )\sim \prod_{j\leq d}\tilde{f}_{j}\left( x_{j}\right)
\phi(\varepsilon,d),\qquad \varepsilon \rightarrow 0,  \label{eq:DH_factorization}
\end{equation}%
where $x_{j}$ is the projection of $x$ over the $j$-th principal axis, $\phi(\varepsilon,d)$ is volumetric term and $d=d(\varepsilon )$ is the number of considered terms of the decomposition
diverging to infinity as $\varepsilon$ vanishes. Now, from the application point of view, the independence assumption appears quite restrictive and the spatial factor $\prod_{j\leq d}\tilde{f}_{j}$ results just a \textit{surrogate intensity} of the \smbp{} because of the dependence between $d$ and $\varepsilon$.
Moreover, one may wonder if the principal component analysis is necessary to obtain the factorization.

The first part of the present work is devoted to propose more general factorizations for the SmBP that relax the hypothesis of independence, and to identify situations in which it is possible to obtain a genuine \textit{intensity}. 
The first stated factorization holds for any positive integer $d$:
\begin{equation*}
	\varphi (x,\varepsilon )\sim f_d(x_{1},\ldots ,x_{d}) V_d(\varepsilon) \extra\left(x,\varepsilon ,d\right) ,\qquad \text{\textnormal{as }}\varepsilon \to 0,  
\end{equation*}
where $f_d$ is the joint distribution of the first $d$ principal components, $V_d(\varepsilon)$ is the volume of a $d$--dimensional ball with radius $\varepsilon$ and $\extra\left(x,\varepsilon ,d\right)\in(0,1]$ denotes an extra factor compensating the use of $(x_{1},\ldots ,x_{d})$ instead of $x$. Such factorization benefits from the fact that $d$ is fixed but, in general, a genuine \textit{intensity} can not be defined because $\extra$ depends on both $x$ and $\varepsilon$. Such dependency is bypassed and hence an \textit{intensity} obtained if one introduces suitable assumptions on the probability law of the process and/or on the point $x$ at which the factorization is evaluated.
\\%
Moving from this first factorization, we prove that
\begin{equation*}
\varphi (x,\varepsilon )\sim f_d(x_{1},\ldots ,x_{d}) \phi(\varepsilon, d) ,\qquad \text{\textnormal{as }}\varepsilon \to 0, \textnormal{ and }d(\varepsilon)\to\infty,  
\end{equation*}
where $\phi(\varepsilon,d)$ is a volume parameter that depends on the decay rate of $\{\lambda_j\}$, the eigenvalues of the covariance operator of $X$.
Such result canditates, in a very natural way, the joint density distribution $f_d$ to be a \textit{surrogate intensity} of the \smbp{} and, under suitable assumptions on $X$, allows to define an \textit{intensity}.
Furthermore, it turns out that, the first factorization is basis free, while for the second one the principal components basis is optimal in some sense.

In the second part of the paper, in order to make available the \textit{surrogate intensity} of the \smbp{} for statistical purpose, we propose a multivariate kernel density approach to estimate $f_d$. Under general conditions, we prove that, although the estimation procedure involves the estimated principal components instead of the true ones, the estimator achieves the classical non--parametric rate of convergence.
To show how such estimator performs on finite sample frameworks, we study its behaviour by means of simulated processes whose \textit{intensity} is known.

The paper outline goes as follow: Section~\ref{sec:notations_and_assumptions} introduces the framework, Section~\ref{sec:SmBP_approx_d_fixed} considers the factorization of the \smbp{} when $d$ is fixed whereas Section~\ref{sec:SmBP_approx_moving_d_to_infinity} when $d$ diverges to infinity as $\varepsilon$ vanishes. Section~\ref{sec:joint_dist_estimate} provides the statistical asymptotic theorem in estimating the joint density $f_d$. Section~\ref{sec:applications} illustrates some numerical examples. Finally Section~\ref{sec:proofs} collects all the proofs.

\section{Preliminaries}
\label{sec:notations_and_assumptions}

Let $\left( \Omega ,\mathcal{F},\mathbb{P}\right) $ be a probability space and $\mathcal{L}_{\left[ 0,1\right] }^{2}$ be the Hilbert space of square integrable real functions on $\left[ 0,1\right] $ endowed with the inner product $\left\langle g,h\right\rangle =\int_{0}^{1}g\left( t\right) h\left( t\right) dt$ and the induced norm $\left\Vert g\right\Vert ^{2}=\left\langle g,g\right\rangle $. Consider a measurable map $X$ defined on $(\Omega ,\mathcal{F})$ taking values in $(\mathcal{L}_{\left[ 0,1\right]}^{2},\mathcal{B})$, where $\mathcal{B}$ denotes the Borel sigma--algebra induced by $\Vert \cdot \Vert $. Denote by
\begin{equation*}
\mu _{X}=\left\{ \mathbb{E}\left[ X\left( t\right) \right] ,t\in \left[ 0,1%
\right] \right\} ,\qquad \text{and}\qquad \Sigma \left[ \cdot \right] =%
\mathbb{E}\left[ \left\langle X-\mu _{X},\cdot \right\rangle \left( X-\mu
_{X}\right) \right],
\end{equation*}%
the mean function and covariance operator of $X$ respectively. Let us consider the Karhunen--Lo\`{e}ve expansion associated to $X$ (see e.g. \citealp{bos00}): denoting by $\left\{ \lambda _{j},\xi _{j}\right\} _{j=1}^{\infty }$ the decreasing to zero sequence of non--negative eigenvalues and their associated orthonormal eigenfunctions of the covariance operator $\Sigma $, the random curve $X$ admits the following representation
\begin{equation*}
X\left( t\right) =\mu _{X}\left( t\right) +\sum_{j=1}^{\infty }\theta
_{j}\xi _{j}\left( t\right) ,\qquad 0\leq t\leq 1,  
\end{equation*}%
where $\theta _{j}=\left\langle X-\mu _{X},\xi _{j}\right\rangle $ are the
so--called principal components (PCs in the sequel) of $X$ satisfying
\begin{equation*}
\mathbb{E}\left[ \theta _{j}\right] =0,\qquad Var\left( \theta _{j}\right)
=\lambda _{j},\qquad \mathbb{E}\left[ \theta _{j}\theta _{j^{\prime }}\right]
=0,\qquad j\neq j^{\prime }.
\end{equation*}%
It is just the case to recall that $\{\xi _{j}\}_{j=1}^{\infty }$ provides an orthonormal basis of the considered Hilbert space and that the Karhunen--Lo\`{e}ve expansion, taking advantage of the euclidean underling structure, isolates the manner in which the random function $X(\omega ,t)$ depends upon $t$ and upon $\omega $. 
\\
In order to achieve our aims, let us consider the following assumptions.
\begin{assume}\label{ass:zero_mean}
	The process is centered, that is $\mu _{X}=0$.
\end{assume}
\begin{assume}\label{ass:boundedness_of_x}
	The center of the ball $x\in \mathcal{L}_{\left[ 0,1\right]}^{2}$ is sufficiently close to the process in its high--frequency part, that is
	\begin{equation}  \label{eq:E[W^2]_bounded}
	x_j^2 \le C_1 \lambda_j, \qquad \textnormal{for any } j\ge 1
	\end{equation}
	where $x_{j}=\left\langle x,\xi _{j}\right\rangle $ for some positive constant $C_1$.
\end{assume}	

\noindent The latter is not a restrictive condition since it holds whenever $x$ belongs to the reproducing kernel Hilbert space generated by the process $X$:
\begin{equation}  \label{eq:def_RKHS}
RKHS(X)=\left\{x\in \mathcal{L}_{\left[ 0,1\right] }^{2} : \sum_{j\ge 1}
\lambda_j^{-1} \langle x,\xi_j\rangle^2 < \infty \right\}.
\end{equation}
Roughly speaking, $x$ is an element of $ RKHS(X)$ only if it is ``at least smooth as the covariance function'', see \citealp[p.\@ 13 and p.\@ 69]{ber:tho04}. Note that $RKHS(X)$ is a very large subspace of $H$ including the finite dimensional ones; in fact, if $x_j=0$ for any $j\ge d$ and any $d\in\mathbb{N}$, then $x\in RKHS(X)$. Furthermore, \ref{ass:boundedness_of_x} is not unusual since it is equivalent to $\sup_{j\geq 1}\mathbb{E}\left[ (\theta_{j} - x_j)^{2} /\lambda_j\right] <\infty$ that was used, for similar purpose by \citealp[Condition~(4.1)]{del:hal10}. 
\begin{assume}\label{ass:boundedness_second_derivatives_of_f}
	Denote by $\Pi _{d}$ the projector onto the $d$--dimensional space spanned by $\{\xi _{j}\}_{j=1}^{d}$. The first $d$ PCs, namely $\boldsymbol{\theta }=\Pi _{d}X=(\theta _{1},\ldots ,\theta _{d})^{\prime }$, admit a joint strictly positive probability density, namely $\boldsymbol{\vartheta }\in \mathbb{R}^{d}\mapsto f_{d}(\boldsymbol{\vartheta })$.
	Moreover, $f_{d}$ is twice differentiable at $\boldsymbol{\vartheta }=\left( \vartheta _{1},\dots ,\vartheta _{d}\right) ^{\prime }\in \mathbb{R}^{d}$ and there exists a positive constant $C_2 $ (not depending on $d$) for which
	\begin{equation}	\label{eq:boundedness_second_derivatives_of_f}
		\left\vert \frac{\partial ^{2}f_{d}}{\partial \vartheta _{i}\partial \vartheta _{j}}(\boldsymbol{\vartheta })\right\vert \leq \frac{C_2 }{\sqrt{\lambda _{i}\lambda_{j}}} f_{d}(x_1,\ldots, x_d),
	\end{equation}
	for any $d\in\mathbb{N}$, $i,j \in\{1,\ldots, d\}$ and $\boldsymbol{\vartheta }\in D^x=\left\{ \boldsymbol{\vartheta }\in \mathbb{R}^{d}:\sum_{j\leq d}\left( \vartheta _{j}-x_{j}\right) ^{2}\leq \rho ^{2}\right\} $ for some $\rho \geq \varepsilon $.		
\end{assume}
\noindent From now on, with a slight abuse of notation and when it is clear from the context, $f_{d}(x)$ denotes $f_{d}\left( x_{1},\dots ,x_{d}\right) $.
\\
To better appreciate the meaning of \eqref{eq:boundedness_second_derivatives_of_f}, note that it can be derived in an intuitive way considering $\mathbf{W}^x=\left( W_{1}, \dots ,W_{d}\right) ^{\prime }$, the deterministic translation of the component--wise standardized version of the PCs defined by
\begin{equation*}
	W^x_{j}=\frac{1}{\sqrt{\lambda_{j}}}\left\langle X-x,\xi _{j}\right\rangle =
	\frac{\theta_j- \left\langle x,\xi _{j}\right\rangle}{\sqrt{\lambda_{j}}}.
\end{equation*}
In fact, \eqref{eq:boundedness_second_derivatives_of_f} is equivalent in assuming the boundedness of the second derivative of the density probability function $g_{d}^x$ of the random vector $\mathbf{W}^x$. Since the latter is a linear transformation of $\boldsymbol{\theta%
}$,  condition \eqref{eq:boundedness_second_derivatives_of_f} is equivalent to
\begin{equation*}  
{\left\vert \frac{\partial^{2}g_d^x}{\partial w_{i}\partial w_{j}}(\mathbf{w})\right\vert}  \leq C_2  g_d^x(\mathbf{0}),
\end{equation*}
for any $d\in\mathbb{N}$, $i,j \in\{1,\ldots, d\}$ and $\mathbf{w}\in D^{\prime}=\left\{ \mathbf{w}\in \mathbb{R}^{d}:\sum_{j\leq 	d}w_j^{2}\lambda_j \leq \rho^2 \right\} $ for some $\rho\ge \varepsilon$.
It is worth noting that \ref{ass:boundedness_second_derivatives_of_f} is not restrictive: it includes, for instance, the case of Gaussian Hilbert valued processes. 

\section{Approximation results for a given $d$}
\label{sec:SmBP_approx_d_fixed}

To state the main result of this section, let us consider a finite positive integer $d$, a given point $x\in \mathcal{L}_{\left[ 0,1\right] }^{2}$ and define 
\begin{equation}  \label{eq:def_of_S_and_R}
	S=S(x,\varepsilon ,d)  =\frac{1}{\varepsilon ^{2}}\sum_{j\geq d+1}\left(	\theta _{j}-x_{j}\right) ^{2},
	\qquad 
	\extra\left(x,\varepsilon ,d\right) =\mathbb{E}\left[ \left( 1-S\right)^{d/2}\mathbb{I}_{\left\{ S< 1\right\} }	\right],
\end{equation}
and
\begin{equation*}
	V_d(\varepsilon) = \frac{\varepsilon ^{d}\pi^{d/2}}{\Gamma \left( d/2+1\right) },
\end{equation*}
that is the volume of the $d$--dimensional ball with radius $\varepsilon$. 
\begin{theorem}
	\label{prop:SB} Let $X$ be a process as above, $\varphi \left( x,\varepsilon \right)$ be the small ball probabilities of $X$, assume \ref{ass:zero_mean}, \ldots, \ref{ass:boundedness_second_derivatives_of_f} and define
	\begin{equation}\label{eq:SmBP_approximation}
	\varphi _{d}(x,\varepsilon ) = f_d(x) V_d(\varepsilon) \extra\left(x,\varepsilon ,d\right)
	,\qquad \text{\textnormal{for }}\varepsilon >0.
	\end{equation}%
	Then
	\begin{equation}
	\left\vert \varphi (x,\varepsilon )-\varphi _{d}(x,\varepsilon )\right\vert
	\leq C_2  \frac{\varepsilon^{2}}{2\lambda_d} \varphi_d(x,\varepsilon),\qquad \text{\textnormal{for }}%
	\varepsilon >0  \label{eq:SmBP_inequality_r_fixed}
	\end{equation}%
	that is
	\begin{equation}
	\varphi (x,\varepsilon )\sim f_d(x) V_d(\varepsilon) \extra\left(x,\varepsilon ,d\right) ,\qquad \text{\textnormal{for }}\varepsilon
	\rightarrow 0.  \label{eq:SB_rough_version}
	\end{equation}
\end{theorem}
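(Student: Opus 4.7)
My plan is to leverage the Karhunen–Loève decomposition to recast the small-ball event as an integral over a Euclidean $d$-ball of random radius, and then apply a second-order Taylor expansion of $f_d$, controlling the remainder with \ref{ass:boundedness_second_derivatives_of_f}. By Parseval,
$\|X-x\|^{2}=\sum_{j\le d}(\theta_{j}-x_{j})^{2}+\varepsilon^{2}S$,
so the event $\{\|X-x\|<\varepsilon\}$ coincides with $\{S<1\}\cap\{\boldsymbol{\theta}\in B_{d}(\boldsymbol{x},\varepsilon\sqrt{1-S})\}$, where $B_{d}$ denotes the Euclidean ball in $\mathbb{R}^{d}$. Conditioning on the tail sub-$\sigma$-algebra and integrating the joint density $f_{d}$ provided by \ref{ass:boundedness_second_derivatives_of_f} yields
\[
\varphi(x,\varepsilon)\;=\;\mathbb{E}\!\left[\mathbb{I}_{\{S<1\}}\int_{B_{d}(\boldsymbol{x},\varepsilon\sqrt{1-S})}f_{d}(\boldsymbol{\vartheta})\,d\boldsymbol{\vartheta}\right].
\]

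Next I would Taylor-expand $f_{d}$ to second order about $\boldsymbol{x}=(x_{1},\dots,x_{d})$: the constant term integrates to $f_{d}(\boldsymbol{x})V_{d}(\varepsilon\sqrt{1-S})=f_{d}(\boldsymbol{x})V_{d}(\varepsilon)(1-S)^{d/2}$ since $V_{d}(r)\propto r^{d}$, the linear term vanishes by central symmetry of $B_{d}$, and taking expectation in $S$ recovers exactly $\varphi_{d}(x,\varepsilon)=f_{d}(\boldsymbol{x})V_{d}(\varepsilon)\extra(x,\varepsilon,d)$ by the very definition of $\extra$. The Hessian remainder is controlled via \ref{ass:boundedness_second_derivatives_of_f} by
\[
\bigl|(\boldsymbol{\vartheta}-\boldsymbol{x})^{\top}Hf_{d}(\tilde{\boldsymbol{\vartheta}})(\boldsymbol{\vartheta}-\boldsymbol{x})\bigr|\;\le\;C_{2}f_{d}(\boldsymbol{x})\Bigl(\sum_{i\le d}\tfrac{|\vartheta_{i}-x_{i}|}{\sqrt{\lambda_{i}}}\Bigr)^{\!2};
\]
using the monotonicity $\lambda_{i}\ge\lambda_{d}$ for $i\le d$ together with Cauchy–Schwarz, the right-hand side is dominated by a multiple of $f_{d}(\boldsymbol{x})\|\boldsymbol{\vartheta}-\boldsymbol{x}\|^{2}/\lambda_{d}$. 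On the integration ball one has $\|\boldsymbol{\vartheta}-\boldsymbol{x}\|^{2}\le\varepsilon^{2}(1-S)$, so integrating and taking expectation in $S$, with the elementary estimate $(1-S)^{d/2+1}\le(1-S)^{d/2}$, produces $|\varphi(x,\varepsilon)-\varphi_{d}(x,\varepsilon)|\le(C_{2}\varepsilon^{2}/2\lambda_{d})\,\varphi_{d}(x,\varepsilon)$, that is \eqref{eq:SmBP_inequality_r_fixed}. The asymptotic equivalence \eqref{eq:SB_rough_version} is then immediate because $\varepsilon^{2}/\lambda_{d}\to 0$ as $\varepsilon\to 0$ at fixed $d$.

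I expect the main obstacle to be the conditioning step in the first paragraph: turning the intersection event into a clean integral of the marginal density $f_{d}$ when $\boldsymbol{\theta}$ and the tail coordinates $\{\theta_{j}\}_{j>d}$ are, in the general Hilbert setting, only uncorrelated rather than independent. This step is automatic in the Gaussian case that \ref{ass:boundedness_second_derivatives_of_f} explicitly accommodates; more generally, one must show that the discrepancy between the conditional law of $\boldsymbol{\theta}$ given the tail and its marginal $f_{d}$ contributes only a term of the same Taylor order already absorbed by the remainder estimate. A secondary delicate point is the Cauchy–Schwarz bound on the quadratic form: care is needed so that the ambient dimension $d$ does not enter the error constant, exploiting the rank-one structure of $\{1/\sqrt{\lambda_{i}\lambda_{j}}\}$ and the spherical symmetry of the integration domain.
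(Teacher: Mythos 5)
Your overall route is the same as the paper's: both decompose $\|X-x\|^{2}$ into the head $\sum_{j\le d}(\theta_{j}-x_{j})^{2}$ and the scaled tail $\varepsilon^{2}S$, rewrite $\varphi(x,\varepsilon)$ as $\mathbb{E}\bigl[\mathbb{I}_{\{S<1\}}\int_{B_{d}(\Pi_{d}x,\,\varepsilon\sqrt{1-S})}f_{d}(\boldsymbol{\vartheta})\,d\boldsymbol{\vartheta}\bigr]$, Taylor--expand $f_{d}$ to second order about $\Pi_{d}x$, kill the linear term by central symmetry of the ball, and recognize the zeroth--order term as $f_{d}(x)V_{d}(\varepsilon)\extra(x,\varepsilon,d)$ straight from the definition of $\extra$. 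The obstacle you flag about conditioning (the conditional law of $\boldsymbol{\theta}$ given the tail versus its marginal $f_{d}$) is a legitimate concern, but the paper makes exactly the same identification, writing the conditional small--ball probability as $\int_{D^{x}}f_{d}(\boldsymbol{\vartheta})\,d\boldsymbol{\vartheta}$ without further justification, so on that point you are not behind the published argument.

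Where your proposal falls short of the stated result is the Hessian remainder. Bounding the quadratic form pointwise by $C_{2}f_{d}(x)\bigl(\sum_{i\le d}|\vartheta_{i}-x_{i}|/\sqrt{\lambda_{i}}\bigr)^{2}$ and then applying Cauchy--Schwarz costs a factor $d$: one obtains $C_{2}\,d\,f_{d}(x)\sum_{i\le d}(\vartheta_{i}-x_{i})^{2}/\lambda_{i}$ and hence a final constant of order $C_{2}d\varepsilon^{2}/(2\lambda_{d})$, not the $d$--free constant claimed in \eqref{eq:SmBP_inequality_r_fixed}. For fixed $d$ this still delivers the equivalence \eqref{eq:SB_rough_version}, but the $d$--free constant is exactly what is exploited later, in \eqref{eq:errore_globale} and Theorem~\ref{teo:SmBP_final}, where $d\to\infty$, so the loss is not cosmetic. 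The paper avoids it by integrating first: over the ball centered at $\Pi_{d}x$ the mixed moments $\int_{D}(\vartheta_{i}-x_{i})(\vartheta_{j}-x_{j})\,d\boldsymbol{\vartheta}$ vanish for $i\neq j$, so the bound collapses to the diagonal sum $\int_{D}\sum_{j\le d}(\vartheta_{j}-x_{j})^{2}/\lambda_{j}\,d\boldsymbol{\vartheta}\le \varepsilon^{2}(1-s)I/\lambda_{d}$ with no dimensional factor. You gesture at precisely this remedy (``spherical symmetry of the integration domain'') but do not carry it out; to match the theorem you must discard the off--diagonal contributions by this symmetry argument rather than absorb them through Cauchy--Schwarz.
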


In other words, for a fixed positive integer $d $ and as $\varepsilon\rightarrow 0$, above theorem states that the \smbp{} $\varphi (x, \varepsilon )$ behaves as $\varphi_d (x, \varepsilon)$ (the usual first order approximation of the \smbp{} in a $d$--dimensional space $f_d(x)V_d(\varepsilon)$) up to the scale factor $\extra\left(x,\varepsilon ,d\right)$. The latter, depending on $x$ only through its high--frequency components $\{x_j\}_{j\ge d+1}$, can be interpreted as a corrective factor compensating the use of a truncated version of the process expansion. 
Note that changing $d$ affects all the terms in the factorization but not the asymptotic \eqref{eq:SB_rough_version}.

%
Because of $\extra(x,\varepsilon,d)$, the dependence on $x$ and $\varepsilon$ can not be isolated in \eqref{eq:SB_rough_version} and hence an \textit{intensity} of the \smbp{} is not, in general, available.
On the other hand, there exist some specific situations in which a genuine \textit{intensity} can be defined from above factorization. We have identified three of these, namely: 
\begin{enumerate}[a)]
	\item $\extra(x,\varepsilon,d)$ is independent on $x$, 
	
	\item
	there exists a finite positive integer $d_0$ such that, for any $d\ge d_0$, $\extra(x,\varepsilon,d)=1$, 
	
	\item 
	for any $x$, $\displaystyle \left\{
\begin{array}{l}
\extra(x,\varepsilon,d) \to 1,
\\
\varphi(x,\varepsilon)\sim f_d(x)V_d(\varepsilon),
\end{array}
\right.\qquad \varepsilon \to 0, \ d(\varepsilon)\to \infty.
$
\end{enumerate}

In the following, we discuss points a) and b), whereas point c) is discussed in Section~\ref{sec:SmBP_approx_moving_d_to_infinity} since requires additional arguments.
In the last discussion we spend few words about the consequences of choosing a basis different from the Karhunen--Lo\`{e}ve one.

\discussion\label{rem:wiener_case} %
\textbf{$\extra(x,\varepsilon,d)$ is independent on $x$.} 
Consider, for instance, $x_j=0$ for any $j\ge d_0+1$ (i.e.\@ $x$ belongs to the space spanned by $\{\xi_1,\ldots, \xi_{d_0}\}$). Then applying Theorem~\ref{prop:SB}, for any $d\ge d_0$, we have
\begin{equation*}
\varphi(x,\varepsilon)\sim f_d(x) V_d(\varepsilon) \extra\left(\varepsilon ,d\right)
,\qquad \text{\textnormal{as }}\varepsilon \to 0,
\end{equation*}
where $V_d(\varepsilon) \extra\left(\varepsilon ,d\right)$ represents a pure volumetric term while $f_d$ is an \textit{intensity} of the \smbp{} evaluated at $x$.
Let us consider the remarkable Gaussian process for which
\begin{equation*}
\varphi (x,\varepsilon )\sim \exp \left\{ -\frac{1}{2}\sum_{j\leq d}\frac{x_{j}^{2}}{\lambda _{j}}\right\}\cdot 
\frac{V_d(\varepsilon) \extra(\varepsilon, d)}{\prod_{j\leq d}\sqrt{2\pi \lambda _{j}}} 
= \Psi_d(x) \cdot \mathcal{V}_d(\varepsilon)
,\qquad \text{as }\varepsilon \longrightarrow 0,
\end{equation*}
where, for any $d\ge d_0$,  $\Psi_d(x) = \Psi_{d_0}(x) = \exp \left\{ -\sum_{j\leq d_0}x_{j}^{2}/(2\lambda _{j})\right\} $ is the \textit{intensity} of the \smbp{} evaluated at $x$. 
If we further specialize the above case to the Wiener process on $[0,1]$, we can show that $\Psi_{d_0}(x)$ is coherent with already known results (see, for instance, \citealp[Theorem~3.1]{li:sha01} and \citealp[Example~5.1]{der:feh:mat:sch03}). In fact, the Karhunen--Lo\`{e}ve decomposition of a Wiener process is known to be
\begin{equation*}
W\left( t\right) =\sum_{j=1}^{\infty }Z_{j} \xi_j(t) , \qquad\textnormal{with}\qquad
\left\{
\begin{array}{l}
t\in [0,1],\\
Z_j \sim N(0,1)\ \ i.i.d.,
\\
\xi _{j}\left( t\right) =\sqrt{2} \sin\left(\left( j-0.5\right) \pi t\right)/\sqrt{\lambda_j},
\\
\lambda _{j}=\left( j-0.5\right)^{-2}\pi ^{-2},
\end{array}
\right.
\end{equation*}
and it is known that
\begin{equation*}
\varphi (x,\varepsilon )\sim \exp \left\{ -\frac{1}{2}\int_0^1 x^{\prime }\left(
t\right) ^{2}dt\right\} \frac{4\varepsilon}{\sqrt{\pi} \exp \left\{ \frac{1}{8\varepsilon ^{2}}\right\}}, \qquad \varepsilon\rightarrow 0,
\end{equation*}
where $x(t)$ is sufficiently smooth. Thus, the latter must be equivalent to $\Psi_{d_0}(x) \cdot \mathcal{V}_d(\varepsilon)$ as $\varepsilon$ goes to zero and for any $d\ge d_0$. Since we are interested in the definition of an intensity, we do not care about the volumetric part and we just compare the spatial parts. Given $ x\left( t\right) =\sum_{j=1}^{d_0}b_{j}\xi_{j}\left(t\right)$ where $b_{j}\in \mathbb{R}$, then $x_{j}$ equals $b_{j}$, for $j=1,\dots ,d_0$ and is null otherwise. Moreover, straightforward computations lead to 
\begin{equation*}
\exp \left\{ -\frac{1}{2}\int_{0}^{1}x^{\prime }\left( t\right)^{2}dt\right\} = \exp \left\{ -\frac{1}{2}\sum_{j=1}^{d_0}b_{j}^{2}\right\} = \Psi_{d_0}(x).
\end{equation*}%
\\
The special case of Wiener process is exploited in Section~\ref{sec:applications_MB} to construct numerical examples in estimating the \textit{intensity} $\Psi_{d_0}(x)$.

\discussion\textbf{The case $\extra(x,\varepsilon,d)=1$.}\label{rem:extra_equals_1} Consider $X$ a $d_0$-dimensional process (that is when it takes values in a $d_0$--dimensional subspace of the Hilbert space). Then $\lambda_j = 0$ for any $j\ge d_0+1$, \ref{ass:boundedness_of_x} leads to $x_j=\theta_j=0$ and  $\extra(x,\varepsilon,j)=1$, for any $j\ge d_0+1$.
Moreover, Theorem~\ref{prop:SB} can be applied only for $d\le d_0$ because $f_{d_0+1}$ is not strictly positive and hence \ref{ass:boundedness_second_derivatives_of_f} fails. Consequently, $\varphi(x,\varepsilon) \sim f_{d_0}(x)V_{d_0}(\varepsilon)$ that is the usual first order approximation of the $d_0$--dimensional process and $f_{d_0}$ is the \textit{intensity} of the \smbp{} of the process.

\discussion\textbf{Changing the basis.} \label{rem:changing_basis}
Let $\{\xi_{j}\}_{j=1}^{\infty }$ be an orthonormal basis  of the Hilbert space arranged so that the sequence $Var(\left\langle X,\xi_{j}\right\rangle)=\lambda _{j}$ is sorted in descending order. Then Theorem~\ref{prop:SB} still holds.

\section{Approximation results when $d$ depends on $\varepsilon$}
\label{sec:SmBP_approx_moving_d_to_infinity}

The goal of this section is to establish which conditions on $X$ allow to simplify \eqref{eq:SB_rough_version}, to get
\begin{equation*}
\varphi (x,\varepsilon )\sim f_{d}(x)V_{d}(\varepsilon ),\qquad \text{%
	\textnormal{as }}\varepsilon \rightarrow 0.
\end{equation*}
In what follows, such result is achieved combining Theorem~\ref{prop:SB} and the limit behaviour of $\extra(x,\varepsilon,d)$ which is strictly related to that of the real random variable $S(x,\varepsilon,d)$ defined by Equation~\eqref{eq:def_of_S_and_R}. On the one hand, whether $d$ and $x$ are fixed, $S$ diverges with $\varepsilon$ tending to zero. On the other hand, if $\varepsilon$ and $x$ are fixed, the larger $d$ the smaller $S$. Hence, one may wonder if it is possible to balance these two effects (as instance, tying the behaviour of $d $ to that of $\varepsilon$) in order to have, for any $x$,
\begin{equation}\label{eq:R_to_1_and_final_factorization}
\displaystyle \left\{
\begin{array}{l}
\extra(x,\varepsilon,d) \to 1,
\\
\varphi(x,\varepsilon)\sim f_d(x)V_d(\varepsilon),
\end{array}
\right.\qquad \varepsilon \to 0,\ d(\varepsilon)\to\infty.
\end{equation}
To do this let us consider the following limit behaviour of $\extra$, as $\varepsilon$ goes to zero and $d$ diverges to infinity.
\begin{proposition}
	\label{pro:E[(1-S)^r/2]_asymptotics} Assume \ref{ass:boundedness_of_x} and suppose that $\sum_{j\geq d+1}\lambda _{j}=o\left( 1/d\right)$, as $d$ goes to infinity.
	Then it is possible to choose $d=d(\varepsilon )$ so that it diverges to infinity as $\varepsilon $ tends to zero and
	\begin{equation}
		d\sum_{j\geq d+1}\lambda _{j}=o(\varepsilon ^{2}).
		\label{eq:r_fixed_for_E(1-S)^r/2_convergence}
	\end{equation}%
	Moreover, as $\varepsilon \rightarrow 0$,
	\begin{equation}
		0\le 1-\extra(x,\varepsilon,d) \leq \frac{C_1(d+2)}{2\varepsilon ^{2}}\sum_{j\geq
			d+1}\lambda _{j}=o(1).  \label{eq:expec_behaviour}
	\end{equation}
\end{proposition}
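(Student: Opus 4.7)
The argument splits into two essentially independent pieces: (i) a diagonal construction of the admissible map $d(\varepsilon)$, and (ii) the concrete bound on $1-\extra(x,\varepsilon,d)$ stated in \eqref{eq:expec_behaviour}. Set $a_d = d\sum_{j\geq d+1}\lambda_j$, so that the hypothesis reads $a_d \to 0$. For (i), a standard diagonal argument yields the required $d(\varepsilon)$: for each integer $n$ let $d_n$ be the smallest index such that $a_m \leq n^{-3}$ whenever $m\geq d_n$, and set $d(\varepsilon) = d_n$ for $\varepsilon \in (1/(n+1), 1/n]$. Then $d(\varepsilon) \to \infty$ as $\varepsilon \to 0$ (because $a_d\to 0$ forces $d_n\to\infty$), and $a_{d(\varepsilon)}/\varepsilon^2 \leq (n+1)^2/n^3 \to 0$, which is precisely \eqref{eq:r_fixed_for_E(1-S)^r/2_convergence}.

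For (ii), I would start from the identity
\begin{equation*}
1 - \extra(x,\varepsilon,d) = \mathbb{P}(S\geq 1) + \mathbb{E}\bigl[\bigl(1-(1-S)^{d/2}\bigr)\mathbb{I}_{\{S<1\}}\bigr],
\end{equation*}
and combine Bernoulli's inequality $(1-s)^{d/2} \geq 1 - (d/2)s$, valid for $s\in[0,1]$ and $d/2\geq 1$ (harmless because by (i) eventually $d(\varepsilon)\geq 2$), with Markov's inequality $\mathbb{P}(S\geq 1) \leq \mathbb{E}[S]$. These immediately give
\begin{equation*}
1 - \extra(x,\varepsilon,d) \;\leq\; \mathbb{E}[S] + \frac{d}{2}\,\mathbb{E}[S] \;=\; \frac{d+2}{2}\,\mathbb{E}[S].
\end{equation*}
By linearity, since $\mathbb{E}[\theta_j]=0$ and $\mathbb{E}[\theta_j^2]=\lambda_j$,
\begin{equation*}
\mathbb{E}[S] \;=\; \frac{1}{\varepsilon^2}\sum_{j\geq d+1}\bigl(\lambda_j + x_j^2\bigr) \;\leq\; \frac{1+C_1}{\varepsilon^2}\sum_{j\geq d+1}\lambda_j,
\end{equation*}
where the last step uses assumption \textbf{(A-2)}. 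Absorbing the $1+C_1$ into the constant labelled $C_1$ in the statement produces the quantitative bound \eqref{eq:expec_behaviour}; the left--hand inequality $0\leq 1-\extra$ is the trivial bound $(1-S)^{d/2}\mathbb{I}_{\{S<1\}}\leq 1$. Finally, substituting the selection from (i) yields $(d+2)\varepsilon^{-2}\sum_{j\geq d+1}\lambda_j = (1+2/d)\,a_{d(\varepsilon)}/\varepsilon^2 \to 0$, closing the $o(1)$ assertion.

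The only delicate point is part (i): one needs $d(\varepsilon)\to\infty$ and the precise rate $a_{d(\varepsilon)}=o(\varepsilon^2)$ to be compatible, and this flexibility must be explicit enough to be reused by the subsequent Section~\ref{sec:SmBP_approx_moving_d_to_infinity}. Step (ii) is then a one--line application of Bernoulli plus Markov, the only caveat being the direction of Bernoulli's inequality for the fractional exponent $d/2$, which requires $d\geq 2$ and is therefore licensed by (i).
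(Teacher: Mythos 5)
Your proof is correct and follows essentially the same route as the paper's: the bound on $1-\extra(x,\varepsilon,d)$ via Bernoulli's inequality applied to $(1-S)^{d/2}$, Markov's inequality for $\mathbb{P}(S\ge 1)$, and $\mathbb{E}[(\theta_j-x_j)^2]=\lambda_j+x_j^2\le (1+C_1)\lambda_j$ is exactly the paper's chain of inequalities (the paper absorbs the $1+C_1$ into the constant written as $C_1$, so your explicit relabelling is if anything more careful). Your diagonal selection of $d(\varepsilon)$ differs only cosmetically from the paper's choice $d=\min\{k\in\mathbb{N}: k\sum_{j\ge k+1}\lambda_j\le \varepsilon^{2+\delta}\}$; the one imprecision is that $d_n\to\infty$ follows from $a_d>0$ for every $d$ (the genuinely infinite--dimensional case, implicit in the paper as well), not from $a_d\to 0$ alone.
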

Let us now consider the following inequality
\begin{equation*}
\left\vert \varphi (x,\varepsilon )-f_{d}\left( x\right) V_{d}(\varepsilon )\right\vert \leq
\left\vert \varphi (x,\varepsilon )-\varphi _{d}(x,\varepsilon )\right\vert
+\left\vert \varphi _{d}(x,\varepsilon )-f_{d}\left( x\right) V_{d}(\varepsilon )\right\vert,
\end{equation*}%
that, thanks to \eqref{eq:SmBP_inequality_r_fixed}, \eqref{eq:expec_behaviour} and to the fact that $0< \extra\le 1$, leads to
\begin{align}
\left\vert \frac{\varphi (x,\varepsilon )}{f_{d}\left( x\right) V_{d}(\varepsilon )}-1\right\vert 
& \leq C_2  \frac{\varepsilon^{2}}{2\lambda_d} \extra(x,\varepsilon,d)
+ \left\vert \extra(x,\varepsilon,d) -1\right\vert,
\nonumber
\\
& \leq C_2  \frac{\varepsilon^{2}}{2\lambda_d} +\frac{C_1(d+2)}{2\varepsilon ^{2}}\sum_{j\geq d+1}\lambda_{j}.
\label{eq:errore_globale}
\end{align}%
%
%
Thus, the wished result holds whenever the right--hand side vanishes as $\varepsilon $ goes to zero, i.e.\@ if there exists $d=d(\varepsilon)$ such that, at the same time the following two conditions hold
\begin{equation} \label{eq:errors_combination}
\varepsilon ^{2}=o\left( \lambda _{d} \right), \qquad \textnormal{and} \qquad (d+2)\sum_{j\geq d+1}\lambda _{j} = o(\varepsilon^{2}).
\end{equation}
In order to obtain \eqref{eq:R_to_1_and_final_factorization}, we combine conditions in \eqref{eq:errors_combination} (plug the first in the second), and we get that eigenvalues must satisfy the \emph{hyper--exponential} decay rate defined by
\begin{equation}\label{eq:rate_eigen_conclusion_1}
\frac{d\sum_{j\geq d+1}\lambda _{j}}{\lambda_d} =o\left( 1\right) ,\qquad \text{\textnormal{as }}%
d\rightarrow \infty.
\end{equation}%
The latter highlights a trade--off between the approximation errors provided by Theorem~\ref{prop:SB} and Proposition~\ref{pro:E[(1-S)^r/2]_asymptotics}. This trade--off is strictly related to a suitable balance between the $d$--th eigenvalue and the terms in the tail of the spectrum of the covariance operator.

It is worth noting that hyper--exponential decay of eigenvalues is a necessary condition to guarantee that the right--hand side of \eqref{eq:errore_globale} vanishes. One may wonder, if it is sufficient as well, that is in other words, if it is possible to define $d=d(\varepsilon)$ so that the errors in \eqref{eq:errors_combination} vanish at the same time as $\varepsilon$ goes to zero. A positive answer to this is furnished by the following.

\begin{theorem}
\label{teo:SmBP_final} Consider hypothesis of Theorem~\ref{prop:SB} and assume that eigenvalues decay hyper--exponentially. It is possible to choose $d=d(\varepsilon)$ so that, if $\varepsilon \rightarrow 0$, then $d\to\infty$ and 
\begin{equation}  \label{eq:SB_final_version}
\varphi (x,\varepsilon ) = f_d\left( x\right) 
V_d(\varepsilon) +o(f_d\left( x\right) V_d(\varepsilon) ).
\end{equation}
\end{theorem}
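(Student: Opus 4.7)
The plan is to chain together the two quantitative estimates already derived: Theorem~\ref{prop:SB} bounds $\lvert\varphi-\varphi_d\rvert$ by $(C_2/2)(\varepsilon^2/\lambda_d)\varphi_d$, and Proposition~\ref{pro:E[(1-S)^r/2]_asymptotics} bounds $1-\extra$ by $(C_1(d+2)/(2\varepsilon^2))\sum_{j\ge d+1}\lambda_j$. As already carried out in~(\ref{eq:errore_globale}), the triangle inequality produces
\begin{equation*}
\left|\frac{\varphi(x,\varepsilon)}{f_d(x)V_d(\varepsilon)}-1\right|\le\frac{C_2}{2}\frac{\varepsilon^2}{\lambda_d}+\frac{C_1(d+2)}{2\varepsilon^2}\sum_{j\ge d+1}\lambda_j,
\end{equation*}
so the theorem reduces to exhibiting $d=d(\varepsilon)$, diverging as $\varepsilon\to 0$, for which both summands are $o(1)$.

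The two summands impose $(d+2)\sum_{j\ge d+1}\lambda_j\ll\varepsilon^2\ll\lambda_d$, and such an $\varepsilon$ exists precisely because hyper-exponential decay yields $(d+2)\sum_{j\ge d+1}\lambda_j/\lambda_d\to 0$. A concrete balanced choice is the geometric mean
\begin{equation*}
\varepsilon_d^2:=\bigl[\lambda_d\,(d+2)\textstyle\sum_{j\ge d+1}\lambda_j\bigr]^{1/2},
\end{equation*}
which equalizes the two summands so that each reduces to $\bigl[(d+2)\sum_{j\ge d+1}\lambda_j/\lambda_d\bigr]^{1/2}=o(1)$. Since $\varepsilon_d^2=o(\lambda_d)$ gives $\varepsilon_d\to 0$, I would then invert the relation, thinning the index set if necessary to ensure strict monotonicity of $\varepsilon_d$, and set $d(\varepsilon)$ to be the largest $d$ with $\varepsilon_d\ge\varepsilon$. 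Then $d(\varepsilon)\to\infty$ and both error terms are $o(1)$, which gives~(\ref{eq:SB_final_version}).

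A preliminary check is that Proposition~\ref{pro:E[(1-S)^r/2]_asymptotics} genuinely applies: its hypothesis $\sum_{j\ge d+1}\lambda_j=o(1/d)$ follows from hyper-exponential decay, since $d\sum_{j\ge d+1}\lambda_j\le \lambda_d\cdot[d\sum_{j\ge d+1}\lambda_j/\lambda_d]=o(\lambda_d)=o(1)$, and its further requirement $d\sum_{j\ge d+1}\lambda_j=o(\varepsilon^2)$ coincides with the second summand being $o(1)$, hence is automatic with the chosen $d(\varepsilon)$. The main obstacle is not analytic but combinatorial: certifying that $d(\varepsilon)$ can be realized as an integer-valued function of $\varepsilon$ that diverges, since $d\mapsto\varepsilon_d$ need not be monotone in general; this is handled by passing to a suitable subsequence. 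Once the balance is set via the geometric mean, substitution into~(\ref{eq:errore_globale}) closes the argument.
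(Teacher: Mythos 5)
Your reduction to the bound \eqref{eq:errore_globale} and the verification that the hypotheses of Proposition~\ref{pro:E[(1-S)^r/2]_asymptotics} follow from hyper--exponential decay are correct, and the geometric--mean balance does make both summands equal to $\bigl[(d+2)\sum_{j\geq d+1}\lambda _{j}/\lambda _{d}\bigr]^{1/2}=o(1)$ \emph{at the discrete points} $\varepsilon =\varepsilon _{d}$. The genuine gap is in the inversion step, and it is not the monotonicity issue you flag (under hyper--exponential decay $\varepsilon _{d}$ is eventually strictly decreasing, since $\varepsilon _{d+1}^{2}/\varepsilon _{d}^{2}\leq \bigl[\tfrac{d+3}{d+2}\,\lambda _{d+1}/\lambda _{d}\bigr]^{1/2}\rightarrow 0$). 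The problem is precisely that this ratio tends to zero: the windows $(\varepsilon _{d+1},\varepsilon _{d}]$ have diverging multiplicative width, and your rule $d(\varepsilon )=\max \{d:\varepsilon _{d}\geq \varepsilon \}$ assigns the dimension $d$ to every $\varepsilon $ in such a window. Take for instance $\varepsilon ^{2}=\lambda _{d+1}$; one checks that eventually $\varepsilon _{d+1}^{2}<\lambda _{d+1}<\varepsilon _{d}^{2}$ (the left inequality because $\sum_{j\geq d+2}\lambda _{j}=o(\lambda _{d+1}/d)$, the right because $\varepsilon _{d}^{2}\geq \bigl[(d+2)\lambda _{d}\lambda _{d+1}\bigr]^{1/2}\geq \sqrt{d+2}\,\lambda _{d+1}$), so your rule selects $d$, and then the second summand of \eqref{eq:errore_globale} satisfies
\begin{equation*}
\frac{C_{1}(d+2)}{2\varepsilon ^{2}}\sum_{j\geq d+1}\lambda _{j}\;\geq\; \frac{C_{1}(d+2)}{2}\,\frac{\lambda _{d+1}}{\lambda _{d+1}}\;=\;\frac{C_{1}(d+2)}{2}\;\longrightarrow \;\infty .
\end{equation*}
Thinning to a subsequence cannot repair this, because it only removes some of the points $\varepsilon _{d}$ and makes the windows wider still; the bound is therefore established only along the sequence $\varepsilon =\varepsilon _{d}$, not for a function $d(\varepsilon )$ defined on a full neighbourhood of $0$.

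What is missing is an argument that the set of radii for which dimension $d$ is admissible is an \emph{interval} whose endpoints can be pushed apart enough that consecutive intervals overlap, while both error terms still vanish on each interval. This is what the paper's proof does: instead of collapsing the admissible range for $d$ to the single balanced point $\varepsilon _{d}^{2}$, it requires $b(d)\leq \varepsilon ^{2}\leq B(d)$ with $b(d)=\bigl(d\sum_{j\geq d+1}\lambda _{j}\bigr)^{1-\delta _{1}}$ and $B(d)=\lambda _{d}^{1-\delta _{2}}$, the exponents $1-\delta _{i}$ being tuned (using the full strength of the hyper--exponential rate, which kills any polynomial factor in $d$) so that the windows $[b(d),B(d)]$ for successive $d$ can be chained and $d(\varepsilon )$ taken as a genuine minimum over $k$. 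Your proof would need to replace the single sequence $\{\varepsilon _{d}\}$ by such a family of overlapping windows, with an explicit check that both summands of \eqref{eq:errore_globale} are $o(1)$ uniformly over each window; as written, the passage from the balanced sequence to an arbitrary $\varepsilon \rightarrow 0$ does not go through.
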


In what follows, in order to discuss assumptions and consequences of above result, some issues are developed.

\discussion\label{rem:intensity_Gaussian_case_2}\textbf{Again about the intensity of the \smbp{}.}
Because of the relation between $d$ and $\varepsilon$, in general, \eqref{eq:SB_final_version} does not allow to define an \textit{intensity} as commonly intended: anyway, being $f_d$ the only term depending on $x$, it can be considered as a \textit{surrogate intensity}. 
\\
The Gaussian processes or their suitable generalizations, provide examples for which $f_d$ leads to define a genuine \textit{intensity}.
At first, consider a Gaussian process $X$, then for any $x\in \mathcal{L}_{\left[ 0,1\right] }^{2}$ and as $\varepsilon$ goes to zero, $\varphi (x,\varepsilon ) \sim \Psi_{d}(x) \cdot \mathcal{V}_d(\varepsilon)$; see \ref{rem:wiener_case}. 
When $d$ tends to infinity, then $ \Psi_d(x)$ tends to $\Psi (x)=\exp \left\{ -\sum_{j=1}^{\infty } x_{j}^{2}/ (2\lambda _{j})\right\}$ for any $x\in \mathcal{L}_{\left[ 0,1\right] }^{2}$ and $\Psi (x)$ plays the role of the \textit{intensity} of the small--ball probability at $x$. Note that, $\Psi (x)$ is not null if and only if $x$ belongs to $RKHS(X)$, see \eqref{eq:def_RKHS}. In particular, if we consider the Wiener process on $[0,1]$, it can be proven, with arguments analogous to those in \ref{rem:wiener_case} and for any smooth real function $x$ on $[0,1]$, that
	\begin{equation*}
		\Psi (x) = \exp \left\{ -\frac{1}{2}\int_0^1 x^{\prime }\left(
		t\right) ^{2}dt\right\}.
	\end{equation*}
	The latter is coherent with already known results; see, for instance, \citealp[Theorem~3.1]{li:sha01}.
\newline
Another situation in which an \textit{intensity} for the \smbp{} can be also defined occurs whether the PCs are independent each one with density belonging to a subfamily of the exponential power (or generalized normal) distribution (see e.g. \citealp{box:tia73}), that is proportional to $\exp \left\{ -\left( |x_{j}|/\sqrt{\lambda _{j}}\right) ^{q}\right\} $, with $q\geq 2$. In this case, $\Psi $ is given by
\begin{equation*}
\Psi (x)=\exp \left\{ -\frac{1}{2}\sum_{j=1}^{\infty }\left( \frac{|x_{j}|}{%
\sqrt{\lambda _{j}}}\right) ^{q}\right\} ,\qquad \text{\textnormal{for any }}%
x\in \mathcal{L}_{\left[ 0,1\right] }^{2}
\end{equation*}%
and, it is not null if $x$ is in $H(q)=\left\{ x\in \mathcal{L}_{\left[ 0,1\right] }^{2}:\sum_{j=1}^{\infty
}\left( |x_{j}|/\sqrt{\lambda _{j}}\right) ^{q}<\infty \right\}$ that includes $RKHS(X)$ when $q\geq 2$.

\discussion \textbf{An example of hyper--exponential decay.} Consider $\lambda _{j}=\exp \{-\beta j^{\alpha }\}$ with $\beta >0$ and $\alpha >1$.
In this case, for any real number $n\ge 1$, it holds
\begin{equation}
\frac{d\sum_{j\geq d+1}\lambda _{j}}{\lambda_d} \leq \frac{d^n \sum_{j\geq d+1}\lambda _{j}}{\lambda_d} \rightarrow 0,\qquad \text{\textnormal{as }}%
d\rightarrow \infty .  \label{eq:example_super-exp}
\end{equation}%
In fact, some algebra and the Bernoulli inequality give
\begin{align*}
\frac{d^n \sum_{j\geq d+1}\lambda _{j}}{\lambda_d} &
=d^{n}\left( \sum_{j\geq 1}\exp \{\beta d^{\alpha }(1-(1+j/d)^{\alpha
})\}\right) \\
& \leq d^{n}\left( \sum_{j\geq 1}\exp \{-\beta \alpha d^{\alpha
	-1}j\}\right) .
\end{align*}%
Since $\exp \{-\beta \alpha d^{\alpha -1}j\}\leq (j^{2}d^{n+\delta })^{-1}$ eventually (with respect to $d$) holds for some positive $\delta $ and for each $j\in \mathbb{N}$, \eqref{eq:example_super-exp} is obtained.

\subsection{Weakening the eigenvalues decay rate}

If on the one hand, the factorization \eqref{eq:SB_final_version} provides the exact form of volumetric part (that is $V_d(\varepsilon)$, the volume of the $d$--dimensional ball of radius $\varepsilon$), on the other hand, it is obtained at the cost of the hyper--exponential eigenvalues decay \eqref{eq:rate_eigen_conclusion_1}. 
Weakening the eigenvalues decay rate, the exact form of the volumetric part no longer appears.
In particular, we focus our interest on the following two further behaviours of the spectrum of $\Sigma$:
\begin{itemize}
\item \textquotedblleft super--exponential\textquotedblright :
\begin{equation*}
\lambda_d^{-1}\sum_{j\geq d+1}\lambda _{j}  =o\left( 1\right) ,\qquad \text{\textnormal{as }}%
d\rightarrow \infty .
\end{equation*}
or, equivalently,
\begin{equation} \label{eq:super-exp_decay}
\lambda _{d+1}/\lambda _{d}\rightarrow 0,\qquad \text{\textnormal{as}}\qquad
d\rightarrow \infty. 
\end{equation}%

\item \textquotedblleft exponential\textquotedblright : there exists a
positive constant $C$ so that
\begin{equation}
\lambda _{d}^{-1}\sum_{j\geq d+1}\lambda _{j}<C,\qquad \text{\textnormal{for
any }}d\in \mathbb{N}.  \label{eq:exp_decay}
\end{equation}
\end{itemize}
It is possible to show that \eqref{eq:rate_eigen_conclusion_1} $\Rightarrow$ \eqref{eq:super-exp_decay} $\Rightarrow$ \eqref{eq:exp_decay} but the contraries do not hold. For instance, for any $\alpha >1$ and $\beta >0$,  $\lambda_{j} = \exp\left\{-\beta j \right\}$ decays exponentially but not super--exponentially, $\lambda_{j} = \exp\left\{-\beta j \ln\left( \ln\left(j \right)\right) \right\}$ decays super--exponentially but not hyper--exponentially while $\lambda_{j} = \exp\left\{-\beta j^\alpha \right\}$ decays hyper--exponentially.

The following Theorem holds.
\begin{theorem}
\label{teo:weakening_decay} Under hypothesis of Theorem~\ref{prop:SB}, as $\varepsilon$ tends to zero, it is possible to choose $d=d(\varepsilon)$ diverging to infinity so that $\varphi (x,\varepsilon ) \sim f_d\left( x\right) \phi(\varepsilon,d)$, where
\begin{itemize}
\item in the super--exponential case
\begin{equation*}  
\phi(\varepsilon,d) = \exp\left\{ \frac{1}{2} d \left[ \log (2\pi e \varepsilon^2) - \log (d) + o(1) %
\right] \right\},
\end{equation*}

\item in the exponential case
\begin{equation*} 
\phi(\varepsilon,d) = \exp
\left\{ \frac{1}{2} d \left[ \log (2\pi e \varepsilon^2) - \log (d) +
\delta(d, \alpha) \right] \right\} ,
\end{equation*}
where $\delta(\cdot,\cdot)$ is such that $\lim_{\alpha\to\infty}
\limsup_{s\to \infty} \delta(s, \alpha) = 0$ and $\alpha$ is a parameter
chosen so that $\lambda_d^{-1}\varepsilon^2\le \alpha^2$.
\end{itemize}
\end{theorem}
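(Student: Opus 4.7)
The plan builds on Theorem~\ref{prop:SB}: for any $d=d(\varepsilon)$ with $\varepsilon^2/\lambda_d\to 0$, one already has $\varphi(x,\varepsilon)\sim f_d(x)V_d(\varepsilon)\extra(x,\varepsilon,d)$, so it suffices to express $V_d(\varepsilon)\extra(x,\varepsilon,d)$ in the form of $\phi(\varepsilon,d)$ announced in each case. Stirling's formula yields $\log V_d(\varepsilon)=\tfrac{d}{2}[\log(2\pi e\varepsilon^2)-\log d]+O(\log d)$, so after dividing by $d/2$ the volume contributes exactly the leading $\log(2\pi e\varepsilon^2)-\log d$ plus an additive $o(1)$; the entire subtlety lies in controlling $\log\extra$.

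In the super-exponential case I would first rewrite \eqref{eq:super-exp_decay} in the equivalent form $\lambda_d^{-1}\sum_{j>d}\lambda_j\to 0$ and then choose $d=d(\varepsilon)$ satisfying $\sum_{j>d}\lambda_j\ll\varepsilon^2\ll\lambda_d$ simultaneously; the explicit choice $\varepsilon^2=(\lambda_d\sum_{j>d}\lambda_j)^{1/2}$ does the job, since both ratios $\sum_{j>d}\lambda_j/\varepsilon^2$ and $\varepsilon^2/\lambda_d$ equal $\sqrt{\sum_{j>d}\lambda_j/\lambda_d}\to 0$. By \ref{ass:boundedness_of_x} this forces $E[S]\to 0$ with $S=\varepsilon^{-2}\sum_{j>d}(\theta_j-x_j)^2$. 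Combining the trivial bound $\extra\le 1$ with the elementary lower bound $\extra\ge(1-\delta_d)^{d/2}P(S\le\delta_d)$, Markov's inequality, and the choice $\delta_d=\sqrt{E[S]}\to 0$, I obtain $|\log\extra|\le(d/2)\delta_d(1+o(1))+o(1)=o(d)$. Substituted into the Stirling expansion this produces the required $\exp\{\tfrac{d}{2}[\log(2\pi e\varepsilon^2)-\log d+o(1)]\}$.

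The exponential regime is a quantitative refinement of the same argument. Condition \eqref{eq:exp_decay} only provides $\sum_{j>d}\lambda_j\le C\lambda_d$, so the two-sided squeeze of the super-exponential case fails and $\extra$ need not tend to $1$. I would introduce $\alpha\ge\sqrt{\varepsilon^2/\lambda_d}$ as a proxy for the ball-radius-to-spectrum-tail ratio and define $\delta(d,\alpha)$ essentially as $(2/d)\log\extra$ after subtracting the Stirling leading part. The main obstacle is precisely the control of $\delta$: since $\extra$ no longer tends to $1$, the crude Markov--Bernoulli squeeze used in the super-exponential step is inadequate, and matching sharp two-sided asymptotics for $\log E[(1-S)^{d/2}I_{\{S<1\}}]$ must be obtained---the upper bound via $(1-S)^{d/2}\le e^{-(d/2)S}$ and the moment generating function of $S$, the lower bound refining the super-exponential estimate---most likely through a Laplace-type or concentration argument leveraging the near-orthogonality of the $\theta_j$'s, coupled with a regime-dependent choice of $d(\varepsilon)$ compatible with $\varepsilon^2/\lambda_d\le\alpha^2$. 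Taking first $d\to\infty$ stabilizes the residual and then $\alpha\to\infty$ drives it to zero, yielding the iterated-limit property of $\delta$ stated in the theorem.
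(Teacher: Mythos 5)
Your overall skeleton --- Theorem~\ref{prop:SB}, then Stirling for $\log V_d(\varepsilon)$, then reducing everything to $\log\extra(x,\varepsilon,d)=o(d)$ --- is exactly the paper's, and your Stirling computation is correct. But the paper obtains $\log\extra=o(d)$ from the Jensen/monotonicity lemma preceding the proof (Equation~\eqref{eq:DH_6.17}), which shows $\extra(x,\varepsilon,d)^{2/d}$ is non--decreasing in $d$ and hence requires \emph{no} quantitative link between $d$ and $\varepsilon$; only the one--sided condition that $\varepsilon^{2}/\lambda_d$ be controlled is then needed, and the rest is delegated to the argument of Delaigle and Hall. Your substitute for that lemma is where the gaps lie. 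In the super--exponential case your lower bound $\log\extra\ge\frac d2\log(1-\delta_d)+\log\mathbb P(S\le\delta_d)$ with $\delta_d=\sqrt{\mathbb E[S]}$ is a legitimate (and in fact slightly sharper-than-needed) variant of Proposition~\ref{pro:E[(1-S)^r/2]_asymptotics}, but it requires $\mathbb E[S]\to0$, i.e.\ $\sum_{j>d}\lambda_j=o(\varepsilon^2)$, \emph{simultaneously} with $\varepsilon^2=o(\lambda_d)$. That two--sided squeeze cannot be arranged for every $\varepsilon\to0$ under super--exponential decay alone: along the radii $\varepsilon_m^2=\lambda_m$, any $k\ge m$ gives $\varepsilon_m^2/\lambda_k\ge1$, while any $k<m$ gives $\sum_{j>k}\lambda_j/\varepsilon_m^2\ge\lambda_{k+1}/\lambda_m\ge1$, so no admissible $k$ exists. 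Your ``explicit choice $\varepsilon^2=(\lambda_d\sum_{j>d}\lambda_j)^{1/2}$'' therefore only selects a particular subsequence of radii rather than defining $d(\varepsilon)$ for all small $\varepsilon$; this is precisely why the paper reserves the two--sided interval construction (with the $\delta_1,\delta_2$ exponents) for the hyper--exponential Theorem~\ref{teo:SmBP_final} and uses the unconditional lemma here.

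The second, larger gap is the exponential case, which is where the statement has its real content: the existence of a $\delta(d,\alpha)$ with $\lim_{\alpha\to\infty}\limsup_{s\to\infty}\delta(s,\alpha)=0$. You define $\delta$ as ``whatever is left of $(2/d)\log\extra$ after the Stirling part'' and then assert the iterated--limit property, but that property is exactly what has to be proven; nothing in the proposal establishes it. Moreover, the route you sketch --- an upper bound via the moment generating function of $S$ and a Laplace--type concentration argument ``leveraging the near--orthogonality of the $\theta_j$'s'' --- is not available under the paper's hypotheses: the principal components are only uncorrelated, not independent, so beyond the first--moment (Markov) bound already used there is no exponential moment or concentration inequality for $S=\varepsilon^{-2}\sum_{j>d}(\theta_j-x_j)^2$ to appeal to. The paper avoids this entirely: the monotonicity lemma gives $\log\extra=o(d)$ regardless of whether $\extra\to1$, and the $\alpha$--dependent residual comes from the volumetric/eigenvalue bookkeeping in the Delaigle--Hall argument, not from sharp two--sided asymptotics of $\log\extra$. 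As written, the proposal proves neither bullet of the theorem in the generality claimed.
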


In other words, if the decay rate changes also the volume factor does. In particular, $f_d(x)$ preserves the role of a \textit{surrogate intensity} whereas $\phi(\varepsilon,d)$ substitutes $V_d(\varepsilon)$ as volumetric term in the factorization. Observe that $\phi(\varepsilon,d)$ depends on terms (namely, $o(1)$ and $\delta(s,\alpha)$) that are implicitly defined and for which we just know the asymptotic behaviour. It is just the case to note that, in the exponential setting, Discussion~\ref{rem:intensity_Gaussian_case_2} about Gaussian and exponential power processes still holds with minor modifications.

\discussion\textbf{About slower eigenvalues decay rates.}\label{rem:standard_BM}
This theoretical problem is partially still open. In fact, a part from the Gaussian processes and, in particular, the Wiener one (whose eigenvalues decay arithmetically but the intensity, evaluated at smooth $x$, can be defined as illustrated in \ref{rem:wiener_case}), to the best of our knowledge, there are no other attempts to provide asymptotic factorizations for the \smbp{} of processes whose eigenvalues decay slower than exponentially.
Hence, at this stage, if no information about the probability law are available a solution is to go back to Theorem~\ref{prop:SB} in order to manage the dependence on $x$ and $\varepsilon$ in $\extra(x,\varepsilon,d)$.

\discussion\textbf{Optimal basis.} \label{rem:basis_independent} 
Although the factorization results in theorems~\ref{teo:SmBP_final} and \ref{teo:weakening_decay} are stated by using the Karhunen--Lo\`{e}ve (or PCA) basis, they hold for any orthonormal basis ordered according to the decreasing values of the variances of the projections, provided that they decay sufficiently fast. In particular, using the same notations as in \ref{rem:changing_basis}, if the sequence $\{\lambda _{j}\}_{j=1}^{\infty }$ has an exponential decay then Theorem~\ref{teo:weakening_decay} still holds and a \textit{surrogate intensity} can be defined.
Note that the variances obtained when one uses the PCA basis exhibit, by construction, the faster decay: in this sense the choice of this basis can be considered optimal.

\section{Estimation of the surrogate intensity}

\label{sec:joint_dist_estimate}

Besides their theoretical interest, theorems~\ref{prop:SB}, \ref{teo:SmBP_final} and \ref{teo:weakening_decay} turn to be useful from applications point of view as well. In fact, under suitable assumptions, they theoretically justify the use of $f_d$ as a \textit{surrogate intensity} for Hilbert--valued processes in statistical applications as done, for instance, within classification problems by \cite{bon:goi2016}. This fact leads immediately to the main task of this section: to make the factorization results usable for practical purposes and, in particular, to introduce an estimator of the \textit{surrogate intensity} $f_d$.

Consider a sample of random curves $\left\{ X_{i},i=1,\dots ,n\right\} $ which we
suppose i.i.d.\@ as $X$. In principle, if the sequence of eigenvalues $%
\left\{ \xi _{j}\right\} _{j=1}^{\infty }$ was known, one should consider
the empirical version of the vector of the first $d$ principal components $%
\mathbf{\theta }_{i}=\left( \theta _{1i},\dots ,\theta _{di}\right) ^{\prime
}\in \mathbb{R}^{d}$, with $\theta _{ji}=\left\langle X_{i}-\mathbb{E}\left[
X_{i}\right] ,\xi _{j}\right\rangle $, and then introduce the classical
kernel density estimate of $f_{d}$ as follows: 
\begin{equation}
f_{d,n}\left( \Pi _{d}x\right) =f_{n}\left( x\right) =\frac{1}{n}%
\sum_{i=1}^{n}K_{H_{n}}\left( \left\Vert \Pi _{d}\left( X_{i}-x\right)
\right\Vert \right)   \label{eq:estim_proj_true}
\end{equation}%
where $K_{H_{n}}\left( \mathbf{u}\right) =\det \left( H_{n}\right)
^{-1/2}K\left( H_{n}^{-1/2}\mathbf{u}\right) $, $K$ is a kernel function
and, $H_{n}=H_{nd}$ is a symmetric semi-definite positive $d\times d$ matrix
(with an abuse of notations, we dropped the dependence on $d$). In practice, the equation \eqref{eq:estim_proj_true} defines
only a pseudo-estimate for $f_{d}$: indeed, the covariance operator $\Sigma $
and then the sequence $\left\{ \xi _{j}\right\} $ are unknown. Thus, to
operationalize these pseudo-estimates it is necessary to consider the
estimates $\widehat{\mathbf{\theta }}_{i}$ and $\widehat{\Pi }_{d}$ of $%
\mathbf{\theta }_{i}$ and $\Pi _{d}$ respectively. In this view, consider
the sample versions of $\mu _{X}$ and $\Sigma $, respectively:%
\begin{equation*}
\overline{X}_{n}\left( t\right) =\frac{1}{n}{\sum_{i=1}^{n}}X_{i}(t),\qquad 
\text{\textnormal{and}}\qquad \widehat{\Sigma }_{n}[\cdot ]=\frac{1}{n}{%
	\sum_{i=1}^{n}}\langle X_{i}-\overline{X}_{n},\cdot \rangle (X_{i}-\overline{%
	X}_{n}).
\end{equation*}%
The eigenelements $\left\{ \widehat{\lambda }_{j},\widehat{\xi }_{j}\right\}
_{j=1}^{\infty }$ of $\widehat{\Sigma }_{n}$ provide an estimation for $%
\left\{ \lambda _{j},\xi _{j}\right\} _{j=1}^{\infty }$ of $\Sigma $, as
well as $\langle X_{i}-\overline{X}_{n},\widehat{\xi }_{j}\rangle =\widehat{%
	\theta }_{ji}$ estimates $\theta _{ji}$ (the asymptotic behaviour of these
estimators has been widely studied; see e.g. \citealp{bos00}). Thus, plugging
these estimates (or the estimate of the eigen--projectors) in %
\eqref{eq:estim_proj_true}, we get the kernel density estimator: 
\begin{equation}
\widehat{f}_{d,n}\left( \widehat{\Pi }_{d}x\right) =\widehat{f}_{n}\left(
x\right) =\frac{1}{n}\sum_{i=1}^{n}K_{H_{n}}\left( \left\Vert \widehat{\Pi }%
_{d}\left( X_{i}-x\right) \right\Vert \right) ,\qquad \widehat{\Pi }_{d}x\in 
\mathbb{R}^{d}.  \label{eq:estim_proj_estim}
\end{equation}%
Since $\widehat{\lambda }_{1}\geq \widehat{\lambda }_{2}\geq \dots \geq 
\widehat{\lambda }_{n}\geq 0=\widehat{\lambda }_{n+1}=\dots $ one could
choose $d=n$, but in practice this is not an appropriate choice:\ the curse of
dimensionality problems in estimate a multivariate density combined with a
bad estimation of the PCs associated to the smallest eigenelements would
jeopardize the quality of estimation. Hence, a suitable dimension $d\ll n$ had to be identified. A first naive way consists in selecting the smallest $d$ for which the Fraction of Explained Variance (defined as FEV$\left( d\right) =\sum_{j\leq d}\lambda_{j}/\sum_{j\geq 1}\lambda _{j}$) exceeds a fixed threshold. Anyway the problem of selecting the dimension $d$ to be used in practice is still open and needs further developments that go beyond the scope of this paper.

If, from a computational point of view, the replacement of $\Pi _{d}$ with $\widehat{\Pi }_{d}$ in \eqref{eq:estim_proj_estim} is a natural way to manage the problem of estimating the surrogate density in practice, one may wonder if that plug-in can influence the rate of convergence of the kernel estimator, or, in other words, if using $\widehat{f}_{n}$ instead of $f_{n}$ has no effect on this rate.
\\
To answer this question, we study the behaviour of $\mathbb{E}\left[f_{d}\left( x\right) -\widehat{f}_{n}\left( x\right) \right] ^{2}$ as $n$ goes to infinity. For the sake of simplicity, we confine the study to the special case $H_{n}=h_{n}^{2}I$ where $I$ is the identity matrix, we assumed $d$  fixed and independent of the observed data, and we suppose that

\begin{assumeALDO}\label{ass:fd_p_differentiable}
	the density $f_{d}\left( x\right) $ is positive and $p$ times differentiable at $x\in \mathbb{R}^{d}$, with $p\geq 2$;
\end{assumeALDO}
	
	\begin{assumeALDO}\label{ass:bandwidth}
		the sequence of bandwidths $h_{n}$ is such that: 
	\begin{equation*}
	h_{n}\rightarrow 0\qquad \text{\textnormal{and}}\qquad \frac{nh_{n}^{d}}{%
		\log n}\rightarrow \infty \qquad \text{\textnormal{as }}n\rightarrow \infty ;
	\end{equation*}
	\end{assumeALDO}
	
	\begin{assumeALDO}\label{ass:kernel}
		the kernel $K$ is a Lipschitz, bounded, integrable density function with compact support $\left[ 0,1\right] $;
	\end{assumeALDO}
	
	\begin{assumeALDO}\label{ass:moments_of_X}
	the process $X$ satisfies the following condition: there exists	two positive constants $s$ and $\kappa $ such that for all integer $m\geq 2$,
	\begin{equation*}
	\mathbb{E}\left[ \left\Vert X-x\right\Vert ^{m}\right] \leq \frac{m!}{2}%
	s\kappa ^{m-2}.
	\end{equation*}
	\end{assumeALDO}

The hypothesis \ref{ass:fd_p_differentiable}, \ref{ass:bandwidth} and \ref{ass:kernel} are standard in the non--parametric framework, and $p\geq 2$ is required because of \ref{ass:boundedness_second_derivatives_of_f}. Moreover, condition \ref{ass:moments_of_X} holds for a wide family of processes including the Gaussian one.

Observe firstly that one can control the quadratic mean under study by
intercalating the pseudo-estimator \eqref{eq:estim_proj_estim}; in fact,
thanks to the triangle inequality 
\begin{equation}
\mathbb{E}\left[ f_{d}\left( x\right) -\widehat{f}_{n}\left( x\right) \right]
^{2}\leq \mathbb{E}\left[ f_{d}\left( x\right) -f_{n}\left( x\right) \right]
^{2}+\mathbb{E}\left[ f_{n}\left( x\right) -\widehat{f}_{n}\left( x\right) %
\right] ^{2}.  \label{eq:disug_triang_media_2}
\end{equation}%
About the first term on the right--hand side of %
\eqref{eq:disug_triang_media_2}, it is well known in the literature (see for
instance \citealp{wan:jon95}) that under assumptions~\ref{ass:fd_p_differentiable}, \ldots, \ref{ass:moments_of_X}, and taking the
optimal bandwidth
\begin{equation}
c_{1}n^{-{1}/{(2p+d)}}\leq h_{n}\leq c_{2}n^{-{1}/{(2p+d)}}
\label{eq:optimal_bandwidth}
\end{equation}%
where $c_{1}$ and $c_{2}$ are two positive constants, one gets the minimax
rate: 
\begin{equation*}
\mathbb{E}\left[ f_{d}\left( x\right) -f_{n}\left( x\right) \right]
^{2}=O\left( n^{-2p/\left( 2p+d\right) }\right)
\end{equation*}%
uniformly in $\mathbb{R}^{d}$. Therefore, it is enough to control the second
addend on the right--hand side of \eqref{eq:disug_triang_media_2}.
\newline
The following theorem states that, assuming a suitable degree of regularity for the density $f_{d}$ depending on $d$, the rate of convergence in quadratic mean of $\widehat{f}_{n}\left( x\right) $ towards $f_{n}\left( x\right) $ is negligible with respect to the one of $f_{n}\left( x\right) $ towards $f_{d}\left( x\right) $. Thus, to use the estimated principal components instead of the empirical ones does not affect the rate of convergence.

\begin{theorem}
	\label{prop:rate_convergence} Assume \ref{ass:fd_p_differentiable}, \ldots, \ref{ass:moments_of_X} with $p> 2\vee 3d/2$ and consider
	the optimal bandwidth \eqref{eq:optimal_bandwidth}. Thus, as $n$ goes to
	infinity, 
	\begin{equation*}
	\mathbb{E}\left[ f_{n}\left( x\right) -\widehat{f}_{n}\left( x\right) \right]
	^{2}=o\left( n^{-2p/\left( 2p+d\right) }\right) ,
	\end{equation*}%
	uniformly in $\mathbb{R}^{d}$.
\end{theorem}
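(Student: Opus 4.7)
The plan is to bound $\mathbb{E}[f_n(x) - \widehat{f}_n(x)]^2$ by combining three ingredients: the Lipschitz continuity and compact support of $K$ (\ref{ass:kernel}), moment control on $\eta_n = \|\widehat{\Pi}_d - \Pi_d\|_{\mathrm{op}}$ inherited from \ref{ass:moments_of_X} via standard perturbation theory for empirical covariance operators (see e.g.\@ \citealp{bos00}; here the fact that $d$ is fixed and the spectral gap at $\lambda_d$ is positive is crucial), and the $O(h_n^d)$ smallness of the small-ball probability established in Theorem~\ref{prop:SB}.

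First I would derive a pointwise bound. Since $K_{H_n}(u) = h_n^{-d} K(u/h_n)$, the Lipschitz property of $K$ combined with the reverse triangle inequality $|\|a\|-\|b\|| \le \|a-b\|$ and the fact that $\mathrm{supp}(K) = [0,1]$ yield
\[
|f_n(x) - \widehat{f}_n(x)| \le \frac{L}{n\, h_n^{d+1}} \sum_{i=1}^n \|(\widehat{\Pi}_d - \Pi_d)(X_i-x)\| \, \mathbf{1}_{A_i},
\]
where $A_i = \{\|\Pi_d(X_i-x)\| \le h_n\} \cup \{\|\widehat{\Pi}_d(X_i-x)\| \le h_n\}$ encodes the localization from compact support and is what ultimately supplies the crucial $h_n^d$ gain.

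Second, I would square this bound, apply Cauchy-Schwarz on the sum to separate the counting variable $N_A = \sum_i \mathbf{1}_{A_i}$ from the weighted sum $\sum_i \|X_i-x\|^2 \mathbf{1}_{A_i}$, and dominate the operator-norm perturbation by $\eta_n \|X_i-x\|$. Taking expectation and applying H\"older's inequality with suitable conjugates $q_1, q_2, q_3$ decouples the three sources of randomness: $\mathbb{E}\eta_n^{2q_1} = O(n^{-q_1})$ for every $q_1 \ge 1$ under \ref{ass:moments_of_X}; $\mathbb{E}\|X_1-x\|^{2q_2} = O(1)$ by the same assumption; and concentration of the small-ball indicators gives $\mathbb{E}N_A^{q_3} = O((nh_n^d)^{q_3})$ up to negligible lower-order terms, upon evaluating $\mathbb{P}(A_1) \sim f_d(x)\, V_d(h_n)$ via Theorem~\ref{prop:SB} applied to both $\Pi_d(X-x)$ and $\widehat{\Pi}_d(X-x)$.

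Assembling these bounds yields an estimate of the form $\mathbb{E}[f_n(x) - \widehat{f}_n(x)]^2 = O(n^{-a}\, h_n^{-b})$ with explicit $a, b$ coming from the H\"older balance; plugging in $h_n \asymp n^{-1/(2p+d)}$ and comparing with the target $n^{-2p/(2p+d)}$ reduces everything to an arithmetic inequality in $p$ and $d$, which the condition $p > 2 \vee 3d/2$ is engineered to make strict, delivering the $o(\cdot)$ rather than $O(\cdot)$ conclusion. The main obstacle is sharpness: kernel differentiation inflates the bound by $h_n^{-(d+1)}$, while the only compensating gains are the $n^{-1/2}$ rate on $\eta_n$ and an $h_n^{d/2}$ gain per observation from the compact-support localization. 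A naive Cauchy-Schwarz estimate falls just short of the required rate, so the delicate step is to choose the H\"older exponents so that $N_A$ contributes its concentrated order $nh_n^d$ (rather than the trivial $n$), recovering an additional $h_n^d$ factor. This refinement, together with the extra smoothness $p > 3d/2$, is precisely what makes the plug-in error asymptotically negligible relative to the minimax rate of the pseudo-estimator $f_n$.
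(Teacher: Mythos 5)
Your opening step---a single Lipschitz bound for $K(V_i/h_n)-K(\widehat V_i/h_n)$ over the union event $A_i=\{V_i\le h_n\}\cup\{\widehat V_i\le h_n\}$---silently assumes that $K$ is Lipschitz \emph{across} the boundary of its support, i.e.\ that $K(1)=0$. Assumption \ref{ass:kernel} does not grant this, and the paper's proof is organized precisely around that obstruction: it splits $S_n-\widehat S_n$ into the concordant part on $A_i\cap B_i$ (with $A_i=\{V_i\le h_n\}$, $B_i=\{\widehat V_i\le h_n\}$), where the Lipschitz argument applies and, after Cauchy--Schwarz and the operator-norm bound $\mathbb{E}\bigl[\Vert\widehat\Pi_d-\Pi_d\Vert_\infty^2\bigr]=O(1/n)$ quoted from Biau and Mas, yields $O\bigl(1/(nh_n^2)\bigr)$; and the two discordant parts on $A_i\cap\overline B_i$ and $\overline A_i\cap B_i$, where one kernel is ``on'' and the other is ``off'' and no Lipschitz cancellation is available. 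The discordant terms are handled by an entirely different device---an auxiliary sequence $\kappa_n\to0$, the inclusion of $A_i\cap\overline B_i$ into an annulus $\{h_n(1-\kappa_n)<V_i\le h_n\}$ of probability $\sim h_n^d\kappa_n$ union a deviation event $\{\Vert\widehat\Pi_d-\Pi_d\Vert\,\Vert X_i-x\Vert>C\kappa_nh_n\}$, and an optimization over $\kappa_n$---and they produce the contribution $O\bigl(n^{-5/4}h_n^{-2d}\bigr)$, which at the optimal bandwidth equals $O\bigl(n^{-(10p-3d)/(4(2p+d))}\bigr)$. This is not a technicality: it is exactly this term that forces the hypothesis $p>3d/2$. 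Your outline never produces it; the bound your computation would actually deliver, $O\bigl(n^{-1}h_n^{-2}\bigr)$, contains no trace of $3d/2$, so the assertion that the stated condition on $p$ ``is engineered'' to close your final arithmetic cannot be verified and does not reflect where that condition comes from.

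Two further steps are asserted rather than argued. First, the concentration $\mathbb{E}\bigl[N_A^{q_3}\bigr]=O\bigl((nh_n^d)^{q_3}\bigr)$ is delicate because $\widehat\Pi_d$ is built from the whole sample: the indicators of $\{\widehat V_i\le h_n\}$ are neither independent across $i$ nor independent of $\widehat\Pi_d$, so $\mathbb{P}(\widehat V_1\le h_n)=O(h_n^d)$ is itself something to prove (the paper's annulus--deviation splitting is, in substance, that proof; for the concordant part it reduces to the deterministic event via $\mathbb{E}[\mathbb{I}_{A_i\cap B_i}]\le\mathbb{E}[\mathbb{I}_{A_i}]$, decouples $\Vert X-x\Vert^{k}$ from the indicator by the Chebyshev algebraic inequality, and then applies Bernstein and Borel--Cantelli). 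Second, the moment bounds $\mathbb{E}\bigl[\eta_n^{2q_1}\bigr]=O(n^{-q_1})$ for all $q_1\ge1$ go beyond what the paper uses (only the second moment and a first-order deviation bound) and would require justification. Until the discordant-event contribution is supplied and bounded, your argument does not establish the claimed $o\bigl(n^{-2p/(2p+d)}\bigr)$ rate.
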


Formulation \eqref{eq:estim_proj_estim} requires that each random curve $%
X_{i}\left( t\right) $ is observed entirely in the continuum and without
noise over $\left[ 0,1\right] $. In practice, a discretization is
inevitable as the curves are available only at discrete design points $%
\left\{ \tau _{i,1},\tau _{i,2},\dots ,\tau _{i,p_{i}}\right\} $, $\tau
_{i,j}\in \left[ 0,1\right] $, that are not necessarily the same for each $i$%
. Thus, it is necessary to introduce some numerical approximation to compute
the estimates of the $d$ principal components involved.
\\
When each curve is observed without errors over the same fixed equispaced
grid $\left\{ \tau _{1}=0,\tau _{2},\dots ,\tau _{p-1},\tau _{p}=1\right\} $%
, with $p$ sufficiently large, then one can replace simply integrals by
summations: the empirical covariance operator is approximate by a matrix and
its eigenelements are computed by standard numerical algorithms (see \citealp{ric:sil91}). This is the approach
we follow in the simulations in Section \ref{sec:applications} below.
\\
A more general situation occurs when observed data are discretely sampled
and corrupted by noise. In this case, one has the observed pairs $\left\{
\left( \tau _{i,j},Y_{i,j}\right) ,i=1,\dots ,n,j=1,\dots ,p_{i}\right\} $,
where $Y_{i,j}=X_{i}\left( \tau _{i,j}\right) +\varepsilon _{ij}$ and the
errors $\varepsilon _{ij}$ are i.i.d.~with zero mean and finite variance. If
each $p_{i}\geq M_{n}$, where $M_{n}$ is a suitable sequence tending to
infinity with $n$ (we refer to this case as \emph{dense functional data}), a
presmoothing process is run before to conduct PCA using the sample mean and
covariance computed form the smoothed curves (see, for instance, \citealp{hal:mul:wan06}).
In this case, under suitable hypothesis, the estimators of eigenelements are
root-$n$ consistent and first-order equivalent to the estimators obtained if
curves were directly observed (see \citealp[Theorem~3]{hal:mul:wan06}).

\section{Finite sample performances in estimating the surrogate density}
\label{sec:applications}

We illustrate now, through numerical examples, the feasibility of SmBP factorization approach by exploring how the proposed estimator works in a finite sample setting. We consider only two situations because of the difficulty in finding explicit expressions for the \textit{intensity}.
At first, we focus on a finite dimensional process for which the surrogate density is straightforward derived. After, we deal with the Wiener process that is one of the few infinite dimensional processes whose \textit{intensity} can be derived, as already illustrated.
In both cases, we study how the estimates behave varying the sample size and $d$. All simulations rest on the density estimator defined in \eqref{eq:estim_proj_estim}, and are performed on a suitable grid of the $d$--dimensional factor space: the algorithms are implemented in R, and exploit the function \texttt{kde} in the package \texttt{ks} (see \citealp{duo07}).

\subsection{Finite dimensional setting}\label{sec:finite_dim_experiment}

Consider a one-dimensional random process whose trajectories are defined by%
\begin{equation*}
X\left( t\right) =a\sqrt{2/\pi }\sin \left( t\right), \qquad t\in \left[
0,\pi \right],
\end{equation*}
where $a$ is a random variable with zero mean, unitary variance, density $f_{a}$ and cumulative distribution function $F_{a}$. Given $x\left( t\right) =b\sqrt{2/\pi }\sin \left( t\right) $ with $b\in \mathbb{R}$, then, for any $\varepsilon >0$, $	\varphi \left( x,\varepsilon \right) = F_{a}\left( b+\varepsilon \right) -F_{a}\left( b-\varepsilon \right)$, 
and, as $\varepsilon$ goes to zero, $\varphi \left( x,\varepsilon \right) \sim 2\varepsilon f_{a}\left( b\right) $. Such asymptotic is the same obtained from the \smbp{} factorization: since the first PC is $\theta =a$ and $x_{1}=b$,
it holds
\begin{equation*}
\varphi \left( x,\varepsilon \right) \sim f_{1}\left( x_{1}\right)
\varepsilon \pi ^{1/2}/\Gamma \left( 1/2+1\right) =2f_{a}\left( b\right)
\varepsilon,\qquad \varepsilon \rightarrow 0,
\end{equation*}
with $f_{a}$ being the \textit{intensity} of the \smbp{}.

In this framework, $f_a$ is compared with its estimates $\widehat{f}_{1,n}$ from a sample of curves, for different $x\left( t\right) $,
varying the nature of $a$ and the sample size. In practice, set $n$ and the
distribution of $a$, we generated $1000$ samples $\left\{
X_{i}\left( t\right) ,i=1,\dots ,n\right\} $, i.i.d.~as $X\left( t\right) $,
(with $n=50,100,200,500,1000$) where every curve is discretized over a mesh
consisting on $100$ equispaced points $\{t_{j}=\left( j-1\right) \pi /99$, $%
j=1,\dots ,100\}$. For each sample, we estimated the eigenfunction $\xi
\left( t\right) $, the associated PC $\theta $ and its density via kernel procedure. Besides such samples, we built a set of curves $x^{b}\left(
t\right) =b\sqrt{2/\pi }\sin \left( t\right) $ (discretized on the same grid
as $X\left( t\right) $), where $b$ is a suitable increasing sequence of real
values. The estimated density $\widehat{f}_{1,n}$ is then evaluated at the
points $\widehat{x}_{1}^{b}=\left\langle x^{b}\left( t\right) ,\widehat{\xi }%
\left( t\right) \right\rangle $ and compared with the true values $f_{a}\left( b\right) $ in term of relative mean square prediction error (RMSEP $=$ $\sum_{b}\left[ \widehat{f}_{1,n}\left( \widehat{x}_{1}^{b}\right) -f_{a}\left( b\right) \right] ^{2}\ /\ \sum_{b}f_{a}^{2}\left( b\right) $ ) over the $1000$ replications. Moreover
it is also possible investigate for what values $b$ the estimate of the
surrogate density is better, by using the absolute percentage error (APE = $%
\left\vert \widehat{f}_{1,n}\left( \widehat{x}_{1}^{b}\right) -f_{a}\left(
b\right) \right\vert /f_{a}\left( b\right) $ ).

In the experiment we take $a$ distributed as: 1) standard Gaussian (that is, 
$a\sim \mathcal{N}\left( 0,1\right) $), 2) a standardized Student $t$
with $5$ df (that is, $a\sim t\left( 5\right) /\sqrt{5/3}$), 3) a
standardized Chi-square distribution with $8$ df (that is, $a\sim \left(
\chi ^{2}\left( 8\right) -8\right) /4$). About $b$, we used sequences
consisting of $160$ equispaced points, over the interval $\left[ -4,4\right] 
$ for the distributions 1) and 2), and $\left[ -2,6\right] $ for the
asymmetric distribution 3).

The MSEP ($\times 10^{-2}$) obtained under the different experimental
conditions are collected in Table \ref{tab:Exp1_MSEP}. As expected, results
improve\ as the sample size increases: that is due both to the better
estimates of projections $\widehat{\theta }$ and $\widehat{x}^{b}$ and to
the better performances of the kernel estimator. On the other hand,
differences due to the shape of distributions occur: long tails and
asymmetries produce a deterioration in estimates.

\begin{table}[tb!]
	\begin{center}
		\begin{tabular}{|c|cc|cc|cc|}
			\hline
			& \multicolumn{2}{c|}{$N\left( 0,1\right) $} & \multicolumn{2}{c|}{$t\left(
				5\right) /\sqrt{5/3}$} & \multicolumn{2}{c|}{$\left( \chi ^{2}\left(
				8\right) -8\right) /4$} \\ 
			$n$ & Mean & Std. & Mean & Std. & Mean & Std. \\ \hline
			50 & 3.235 & (2.681) & 5.921 & (2.557) & 4.081 & (2.842) \\ 
			100 & 1.860 & (1.444) & 4.775 & (1.503) & 2.401 & (1.619) \\ 
			200 & 1.091 & (0.824) & 4.138 & (0.878) & 1.422 & (0.887) \\ 
			500 & 0.546 & (0.355) & 3.737 & (0.477) & 0.753 & (0.443) \\ 
			1000 & 0.330 & (0.220) & 3.606 & (0.327) & 0.453 & (0.233) \\ \hline
		\end{tabular}%
	\end{center}
	\caption{Mean and standard deviation of RMSEP (Results $\times 10^{-2}$) for
		Gaussian, $t$ and $\protect\chi ^{2}$ distributions, computed over $1000$
		Monte Carlo replications varying the sample size $n$. }
	\label{tab:Exp1_MSEP}
\end{table}

The APE for some selected values $b$ when $n=200$ are reproduced in Figure %
\ref{fig:Exp1_APE}. As one can expect, the quality of estimate worsens at
the edges of the distributions, when $b$ is rather far from zero. This fact
is connected to the limitations of kernel density estimator in evaluate the
tails of distributions.

\begin{figure}[tb!]
	\begin{center}
		\includegraphics[trim = 1cm 1cm 1cm 1cm,
		height=5cm,width=5cm]{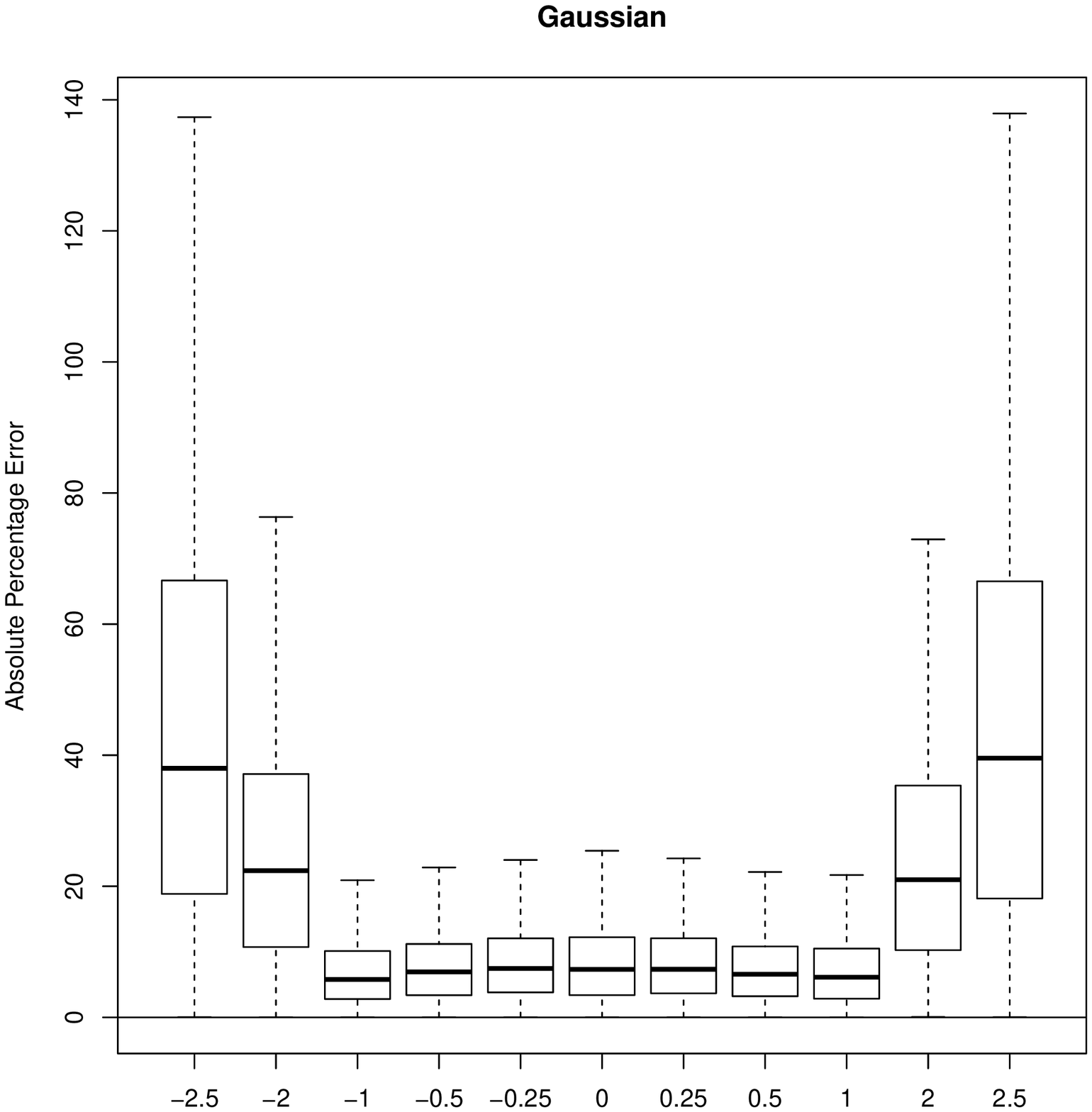} 
		\includegraphics[trim = 0cm 1cm 1cm 1cm,
		height=5cm,width=5cm]{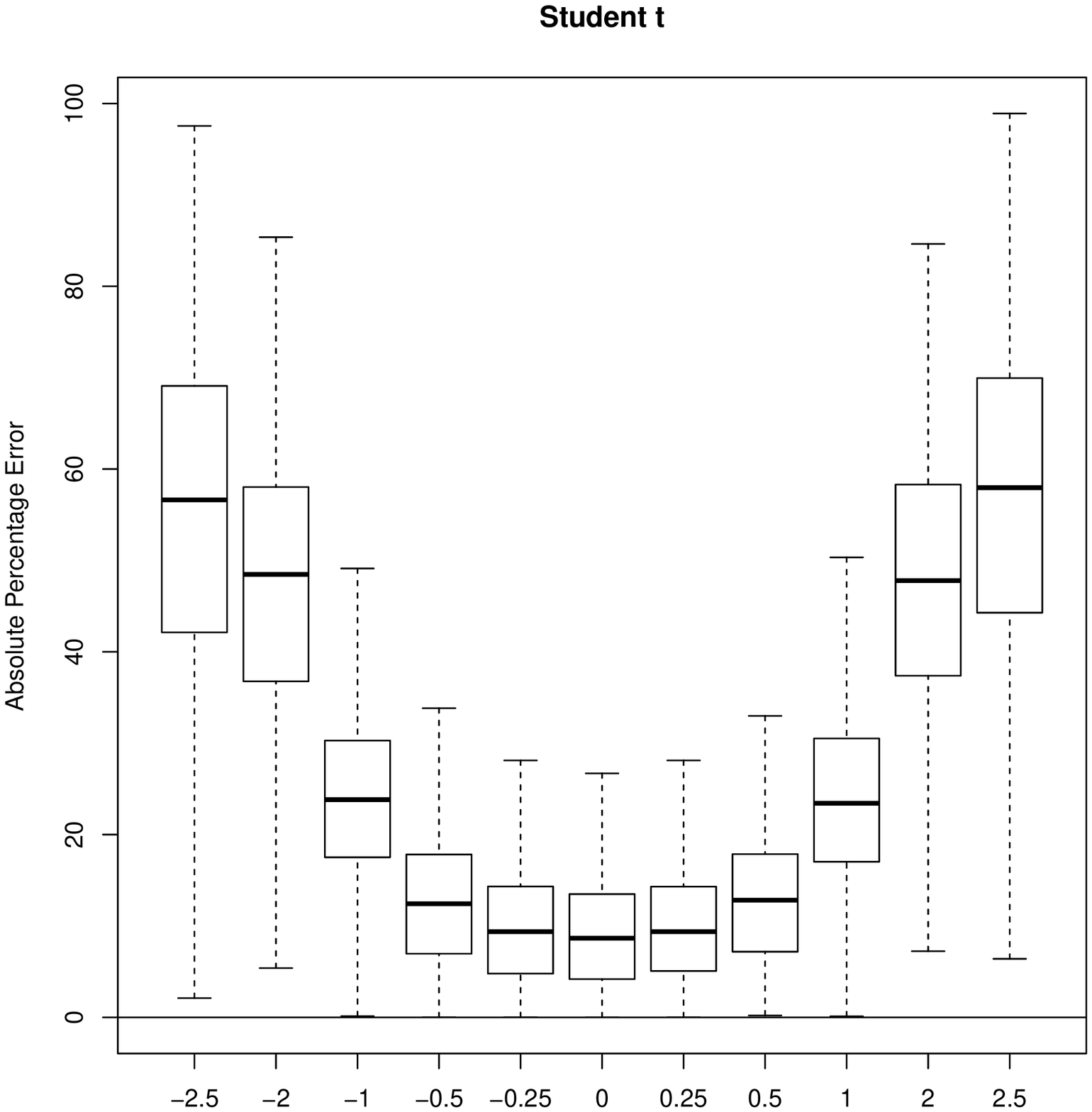} 
		\includegraphics[trim =
		0cm 1cm 1cm 1cm, height=5cm,width=5cm]{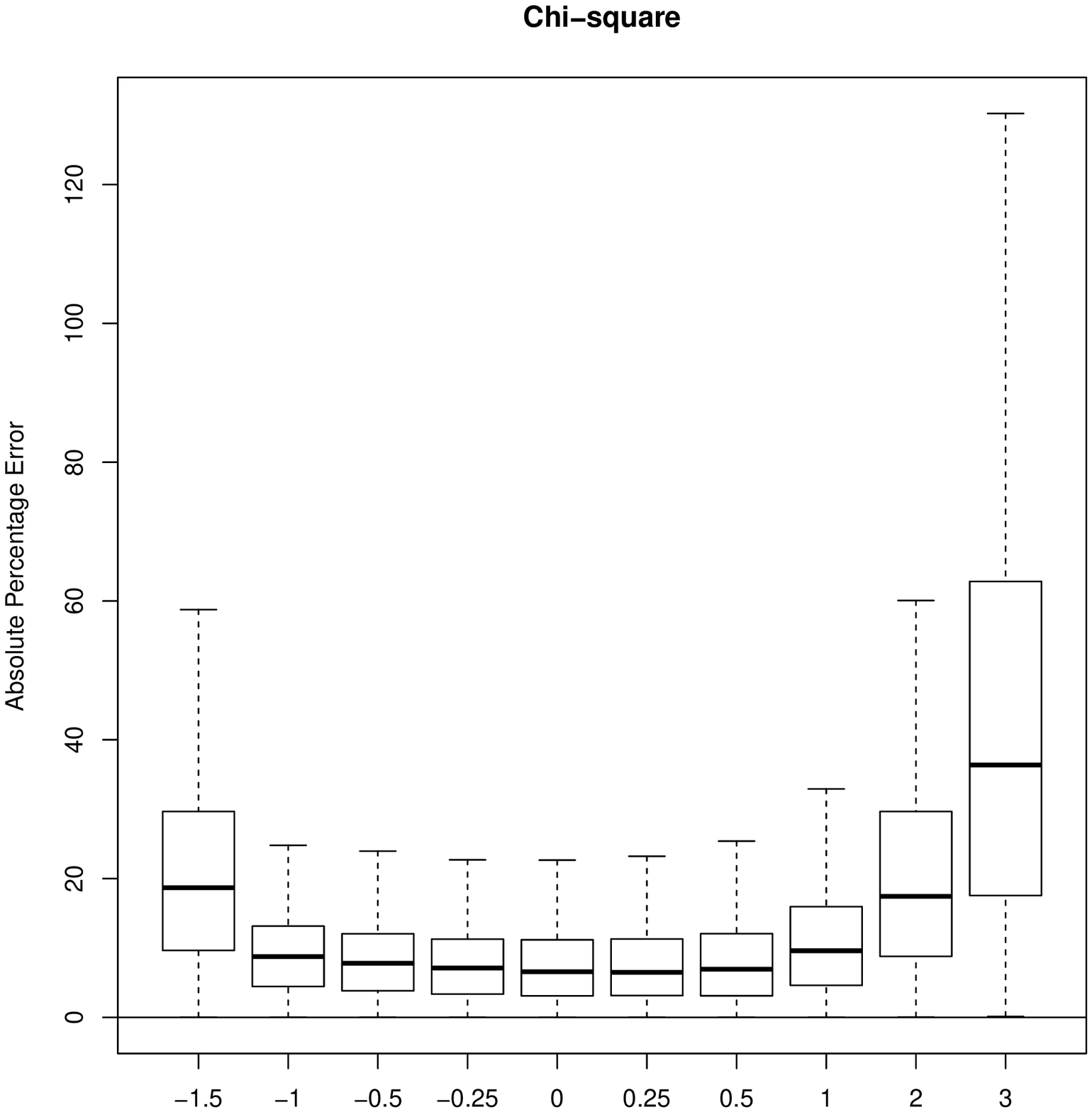}
	\end{center}
	\caption{Absolute percentage errors in estimating $f_a\left( b\right) $
		varying $b$ for Normal, $t$ and $\protect\chi ^{2}$ distributions respectively.}
	\label{fig:Exp1_APE}
\end{figure}

\subsection{Infinite dimensional setting}
\label{sec:applications_MB}

In this second experiment, we deal with an infinite dimensional setting in order to study how the estimation of the \textit{intensity} of the \smbp{} behaves according to the sample size and the dimensional parameter $d$. To do this, keeping in mind \ref{rem:wiener_case}, let us consider a Wiener process $X$ on $\left[ 0,1\right] $ and the smooth function
$
	x\left( t\right) =\sum_{j=1}^{d_0}b_{j}\xi _{j}\left(t\right),
$
with, for the sake of simplicity, $d_0=1$, that is 
\begin{equation}
	x\left( t\right) =b\frac{2\sqrt{2}}{\pi }\sin \left( \frac{\pi t}{2}\right),\qquad t\in \left[ 0,1\right] ,  \label{eq:x_b}
\end{equation}%
where $b\in \mathbb{R}$, so that the \textit{intensity} is $\Psi_{d_0}(x) = \exp \left\{ -b^{2}/2\right\}$.

The experiment follows a similar route as in \ref{sec:finite_dim_experiment}. In a first step, we generated $1000$ samples $\left\{ X_{i}\left( t\right) ,i=1,\dots ,n\right\} $, (with $n=50,100,200,500,1000$) where every curve is discretized over $100$
equispaced points $\mathcal{G}=\left\{ t_{j}=\left( j-1\right) /99,j=1,\dots
,100\right\} $, and $160$ fixed curves $x^{b}\left( t\right) $ generated
according to (\ref{eq:x_b}) and discretized over $\mathcal{G}$ ($b$ is an
increasing sequence of equispaced points, over the interval $\left[ -4,4%
\right] $). In a second step, for each sample, once empirical eigenfunctions 
$\widehat{\xi }_{j}\left( t\right) $ are obtained, we estimated $f_{d}$
(with $d=1,\dots ,6$) and we computed them at $\left( \widehat{x}%
_{1}^{b},\dots ,\widehat{x}_{d}^{b}\right) ^{\prime }$ where $\widehat{x}%
_{j}^{b}=\left\langle x^{b}\left( t\right) ,\widehat{\xi }_{j}\left(
t\right) \right\rangle $. Finally, we compared the estimated surrogate density
with the true one in term of relative mean square prediction error (MSEP)
over the $1000$ replications. The obtained results, varying $n$ and $d$ are
reported in Table \ref{tab:Exp2_MSEP}.

\begin{table}[tbp]
	\begin{center}
		{\scriptsize 
			\begin{tabular}{|c|cccccc|}
				\hline
				$n$ & $d=1$ & $d=2$ & $d=3$ & $d=4$ & $d=5$ & $d=6$ \\ \hline
				50 & 3.361 (2.506) & 7.197 (3.728) & 13.529 (7.338) & 22.029 (12.052) & 
				31.900 (15.871) & 42.051 (19.159) \\ 
				100 & 1.948 (1.198) & 4.821 (2.585) & 9.471 (5.535) & 15.857 (8.711) & 
				23.988 (12.272) & 33.729 (15.871) \\ 
				200 & 1.158 (0.719) & 3.143 (1.601) & 6.638 (3.773) & 11.514 (6.295) & 
				17.894 (9.476) & 25.514 (13.098) \\ 
				500 & 0.574 (0.334) & 1.775 (0.932) & 4.168 (2.363) & 7.766 (4.232) & 12.956
				(6.884) & 18.988 (9.284) \\ 
				1000 & 0.345 (0.187) & 1.149 (0.625) & 2.822 (1.636) & 5.858 (3.126) & 
				10.091 (5.425) & 15.292 (7.651) \\ \hline
			\end{tabular}%
		}
	\end{center}
	\caption{Mean and standard deviation (in parentheses) of RMSEP (Results $%
		\times 10^{-2}$) for Wiener process, computed over $1000$ Monte
		Carlo replications varying the sample size $n$ and the dimension $d$.}
	\label{tab:Exp2_MSEP}
\end{table}

As a general comment, one can observe that for each $d$ the MSPE reduces
(both in mean and in variability) increasing $n$, whereas, for each $n$, the
MSPE increases (both in mean and in variability) with $d$. This shows how
the curse of dimensionality interferes in the kernel estimation procedure as
soon as the dimension $d$ exceeds one. To perceive the relation between $d$
and $n$, one has to read the table in diagonal sense: it is possible to use
large $d$ at the cost of use large samples. For instance, similar very good
results (at around $3\%$) are possible using $n=50$ and $d=1$, or $n=200$
and $d=2$, or when $n=1000$ and $d=3$. On the other hand, results benefit
from the fact that the spectrum of the process is rather concentrate. In
fact, the Fraction of Explained Variance (defined as FEV$\left( d\right)
=\sum_{j\leq d}\lambda _{j}/\sum_{j\geq 1}\lambda _{j}$) are: FEV$\left(
1\right) =0.811$, FEV$\left( 2\right) =0.901$, FEV$\left( 3\right) =0.933$,
FEV$\left( 4\right) =0.950$, FEV$\left( 5\right) =0.960$ and FEV$\left(
6\right) =0.966$. Hence, good estimates for the surrogate density are already
possible with $d=1$ or $d=2$, also for medium size samples. In that sense,
this experiment gives an empirical evidence on the use of FEV in select the
dimensional parameter $d$, conscious of the fact that large $d$ need large $%
n $ in order to have better estimates.

\section{Proofs}

\label{sec:proofs}

This section collects proofs of results exposed above.

\subsection{Proof of Theorem~\protect\ref{prop:SB}}

We are interested in the asymptotic behaviour, whenever $\varepsilon $ tends
to zero, of the \smbp{} of the process $X$, that is
\begin{align*}
\varphi (x,\varepsilon ) & =\mathbb{P}\left( \left\Vert X-x\right\Vert \leq
\varepsilon \right) =\mathbb{P}\left( \left\Vert X-x\right\Vert ^{2}\leq
\varepsilon ^{2}\right) \\
&=\mathbb{P}\left( \sum_{j=1}^{+\infty } \langle X-x, \xi_j \rangle^{2}\leq
\varepsilon ^{2}\right) =\mathbb{P}\left( \sum_{j=1}^{+\infty }
\left(\theta_j - x_j \right)^{2}\leq \varepsilon ^{2}\right) ,\qquad \text{%
\textnormal{as }}\varepsilon \rightarrow 0
\end{align*}
Let $S_{1}=\sum_{j\leq d}\left(\theta_j - x_j \right)^{2}$ and $S=\frac{1}{%
\varepsilon^{2}} \sum_{j\geq d+1} \left(\theta_j - x_j \right)^{2}$ be the
truncated series and the scaled version of the remainder respectively. Thus,
the \smbp{} is
\begin{align}
\varphi (x,\varepsilon ) & = \mathbb{P}\left( S_{1}+\varepsilon ^{2}S\leq
\varepsilon^{2}\right) =\mathbb{P}\left( S_{1}\leq \varepsilon ^{2}\left(
1-S\right) \right)  \notag 
\\
&= \mathbb{P}\left( \left\{ S_{1}\leq \varepsilon ^{2}\left( 1-S\right)
\right\} \cap \left\{ S \ge 1 \right\}\right) + \mathbb{P}\left( \left\{
S_{1}\leq \varepsilon ^{2}\left( 1-S\right) \right\} \cap \left\{ 0\le S <
1 \right\}\right)  \notag \\
& =\mathbb{P}\left( \left\{ S_{1}\leq \varepsilon ^{2}\left( 1-S\right)
\right\} \cap \left\{ 0\le S < 1 \right\}\right)  \notag \\
&=\int_{0}^{1} \varphi(s|x,\varepsilon,d)dG\left( s\right)
\label{eq:SmBP_as_integral_of_reduced_smbp}
\end{align}
where $G$ is the cumulative distribution function of $S$. At first, for any $%
s\in (0,1)$, let us consider $\varphi(s|x,\varepsilon,d)$, that is the
\smbp{} about $\Pi _{d}x$ of the process $\Pi _{d}X$ in the space spanned by $%
\left\{\xi _{j}\right\} _{j\leq d}$. In terms of $f_d\left( \cdot \right) $,
the probability density function of $\boldsymbol{\vartheta}=\left(
\vartheta_{1}, \ldots ,\vartheta_{d}\right) ^{\prime }$, it can be written
as
\begin{equation*}
\varphi(s|x,\varepsilon,d)=\int_{D^x}f_d\left( \boldsymbol{\vartheta}\right) d%
\boldsymbol{\vartheta},
\end{equation*}
where 
$
	D=D^x= \left\{ \boldsymbol{\vartheta}\in \mathbb{R}^{d}:\sum_{j\leq d}\left(\vartheta_j - x_j \right)^{2}\leq \varepsilon ^{2}\left( 1-s\right) \right\} 
$
is a $d$--dimensional ball centered about $\Pi_{d}x = (x_1,\ldots,x_d)$ with radius $\varepsilon \sqrt{1-s}$. Now, consider the Taylor expansion of $f=f_d$ about $\Pi x=\Pi _{d}x$,
\begin{align*}
f\left( \boldsymbol{\vartheta} \right) = & f(x_1,\ldots,x_d) + \left\langle
\boldsymbol{\vartheta}-\Pi x , \nabla f(x_1,\ldots,x_d) \right\rangle \\
&+\frac{1}{2} (\boldsymbol{\vartheta}-\Pi x)^{\prime }H_{f}\left(\Pi x+(%
\boldsymbol{\vartheta}-\Pi x) t \right) (\boldsymbol{\vartheta}-\Pi x),
\end{align*}
for some $t \in \left( 0,1\right)$ and with $H_{f}$ denoting the Hessian
matrix of $f$. (In general, $t$ depends on $\boldsymbol{\vartheta}-\Pi x$,
but we are not interested in the actual value of it because the boundedness
of the second derivatives of $f$ allows us to drop, in what follows, those
terms depending on $t$). Then we can write
\begin{align}
\varphi(s|x,\varepsilon,d) = & \int_{D}\left( f(x_1,\ldots,x_d) +
\left\langle \boldsymbol{\vartheta}-\Pi x , \nabla f (x_1,\ldots,x_d)
\right\rangle \phantom{\frac{1}{1}} \right.  \notag \\
&\left. +\frac{1}{2} (\boldsymbol{\vartheta}-\Pi x)^{\prime }H_{f}\left(\Pi
x+(\boldsymbol{\vartheta}-\Pi x) t \right) (\boldsymbol{\vartheta}-\Pi x)
\right) d\boldsymbol{\vartheta}  \notag \\
= & f(x_1,\ldots,x_d) \int_{D} d\boldsymbol{\vartheta} + \int_{D}
\left\langle \boldsymbol{\vartheta}-\Pi x , \nabla f(x_1,\ldots,x_d)
\right\rangle d\boldsymbol{\vartheta}  \notag \\
& + \frac{1}{2} \int_{D} (\boldsymbol{\vartheta}-\Pi x)^{\prime
}H_{f}\left(\Pi x+(\boldsymbol{\vartheta}-\Pi x) t \right) (\boldsymbol{%
\vartheta}-\Pi x) d\boldsymbol{\vartheta}  \notag \\
= & f(x_1,\ldots,x_d) I + \frac{1}{2} \int_{D} (\boldsymbol{\vartheta}-\Pi
x)^{\prime }H_{f}\left(\Pi x+(\boldsymbol{\vartheta}-\Pi x) t \right) (%
\boldsymbol{\vartheta}-\Pi x) d\boldsymbol{\vartheta}
\label{eq:approx_SmBP_primo_ordine_proof_1}
\end{align}
where $I=I\left( s,\varepsilon, d\right) $ denotes the volume of $D$ that is
\begin{equation*}
I= \frac{\varepsilon^d \pi ^{d/2}}{\Gamma \left(d/2+1\right) }\left( 1-s\right) ^{d/2}
\end{equation*}
and, the addend $\int_{D} \left\langle \boldsymbol{\vartheta}-\Pi x , \nabla
f(x_1,\ldots,x_d) \right\rangle d\boldsymbol{\vartheta}$ is null since $%
\left\langle \boldsymbol{\vartheta}-\Pi x , \nabla f(x_1,\ldots,x_d)
\right\rangle $ is a linear functional integrated over the symmetric -- with
respect to the center $(x_1,\ldots,x_d)$ -- domain $D$. Thus from \eqref{eq:approx_SmBP_primo_ordine_proof_1}, thanks to:  the boundedness of second derivatives \eqref{eq:boundedness_second_derivatives_of_f}, the fact that symmetry arguments lead to $\int_{D}(\vartheta_i-x_i)(\vartheta_j-x_j)d\boldsymbol{\vartheta}=0$ for $i\neq j$ and monotonicity of eigenvalues, it follows
\begin{align*}
\vert\varphi(s|x,\varepsilon,d) & -f(x_1,\ldots,x_d) I \vert = \\
&= \left\vert \frac{1}{2} \int_{D}\sum_{i\leq d}\sum_{j\leq
d}(\vartheta_i-x_i)(\vartheta_j-x_j)\frac{\partial^{2}f}{\partial
\vartheta_{i}\partial\vartheta_{j}}\left(\Pi x+ (\boldsymbol{\vartheta}-\Pi
x) t \right) d\boldsymbol{\vartheta}\right\vert \\
&\leq \frac{1}{2}C_2 f(x_1,\ldots,x_d) \left\vert \sum_{i\leq d}\sum_{j\leq
d} \int_{D} \frac{(\vartheta_i-x_i)(\vartheta_j-x_j)}{\sqrt{\lambda_i}\sqrt{%
\lambda_j}} d\boldsymbol{\vartheta}\right\vert \\
&=\frac{1}{2}C_2 f(x_1,\ldots,x_d) \int_{D} \sum_{j\leq d} \frac{%
(\vartheta_j-x_j)^2}{\lambda_j} d\boldsymbol{\vartheta}
\\
&\le \frac{C_2 }{2\lambda_d}f(x_1,\ldots,x_d) \int_{D} \sum_{j\leq d} (\vartheta_j-x_j)^2 d\boldsymbol{\vartheta}.
\end{align*}
Note that
\begin{equation*}
\int_{D} \sum_{j\leq d} (\vartheta_j-x_j)^2 d\boldsymbol{\vartheta} = \int_{\|\boldsymbol{\vartheta}\|_{\mathbb{R}^d}^2 \le \varepsilon^{2}(1-s)} \|\boldsymbol{\vartheta}\|_{\mathbb{R}^d}^2 d\boldsymbol{\vartheta}
\end{equation*}
whose integrand is a radial function (i.e.\@ a map $H:\mathbb{R}^d \to \mathbb{R}$ such that $H(\boldsymbol{\vartheta}) = h(\|\boldsymbol{\vartheta}\|_{\mathbb{R}^d})$ with $h:\mathbb{R}\to\mathbb{R}$), for which the following identity applies
\begin{equation*}
	\int_{\|\boldsymbol{\vartheta}\|_{\mathbb{R}^d}\le R} H(\boldsymbol{\vartheta}) d\boldsymbol{\vartheta} = \omega_{d-1} \int_0^R h(\rho) \rho^{d-1} d\rho,
\end{equation*} 
where $\omega_{d-1}$ denotes the surface area of the sphere of radius 1 in $\mathbb{R}^d$.
Hence 
\begin{equation*}
\int_{\|\boldsymbol{\vartheta}\|_{\mathbb{R}^d}^2 \le \varepsilon^{2}(1-s)} \|\boldsymbol{\vartheta}\|_{\mathbb{R}^d}^2 d\boldsymbol{\vartheta}
=  \frac{2 \pi^{d/2} }{\Gamma \left(d/2\right)} \int_{0}^{\varepsilon\sqrt{1-s}} \rho^{d+1} d\rho
= \frac{ d }{(d+2)} I\varepsilon^{2}(1-s) \le I\varepsilon^{2},
\end{equation*}
where the latter inequality follows from the fact that $s\in[0,1)$. 
This leads to 
\begin{equation}
\left\vert \varphi(s|x,\varepsilon,d) -f(x_1,\ldots,x_d) I \right\vert \le C_2  \frac{\varepsilon^2 I}{2\lambda_d}f(x_1,\ldots,x_d) .  \label{eq:boundeness_first_order_approx_varphi(t)}
\end{equation}
Come back to the \smbp{} \eqref{eq:SmBP_as_integral_of_reduced_smbp},
\begin{equation}  \label{eq:SmBP_as_addition_of_dominant_plus_o_small}
\varphi (x,\varepsilon ) =\int_{0}^{1}f(x_1,\ldots,x_d) I dG\left( s\right)
+ \int_{0}^{1}\left( \varphi(s|x,\varepsilon,d)-f(x_1,\ldots,x_d) I \right)
dG\left( s\right) ,
\end{equation}
and note that, thanks to \eqref{eq:boundeness_first_order_approx_varphi(t)}
and because $d$ is fixed, the second addend in the right--hand side of %
\eqref{eq:SmBP_as_addition_of_dominant_plus_o_small} is infinitesimal with
respect to the first addend
\begin{align*}
& \left\vert \frac{ \int_{0}^{1}\left(
\varphi(s|x,\varepsilon,d)-f(x_1,\ldots,x_d) I \right) dG\left( s\right) }{%
\int_{0}^{1}f(x_1,\ldots,x_d) I dG\left( s\right) } \right\vert \leq \\
& \leq \left\vert \frac{C_2  \frac{\varepsilon^2}{2\lambda_d}f(x_1,\ldots,x_d)
\int_{0}^{1}I dG\left( s\right) }{f(x_1,\ldots,x_d) \int_{0}^{1}I dG\left( s\right)}\right\vert =
C_2  \frac{ \varepsilon^{2}}{2\lambda_d}  .
\end{align*}
Noting that
\begin{equation*}
\int_{0}^{1}I(s,\varepsilon,d) dG(s) = \frac{\varepsilon^d \pi ^{d/2}}{%
\Gamma \left( d/2+1\right) }\mathbb{E}\left[ \left(1-S\right)^{d/2}\mathbb{I}%
_{\left\{ S\leq 1\right\} }\right],
\end{equation*}
we obtain
\begin{equation*}
\left|\varphi (x,\varepsilon ) - \varphi_d(x,\varepsilon) \right| \le C_2  \frac{\varepsilon^{2}}{2\lambda_d} %
\varphi_d(x,\varepsilon) \tag{\ref{eq:SmBP_inequality_r_fixed}}
\end{equation*}
where,
\begin{equation*}
\varphi_d(x,\varepsilon) = f(x_1,\ldots,x_d) \frac{\varepsilon^{d}\pi ^{d/2}%
}{\Gamma \left( d/2+1\right) } \mathbb{E}\left[ \left(1-S\right)^{d/2}%
\mathbb{I}_{\left\{ S\leq 1\right\} }\right] .
\tag{\ref{eq:SmBP_approximation}}
\end{equation*}
Thus, since $d$ is fixed, as $\varepsilon $ tends to zero,
\begin{align*}
\varphi (x,\varepsilon )=\int_{0}^{1} \varphi(s|x,\varepsilon,d) dG\left(
s\right) =& \varphi_d(x,\varepsilon) + o\left( \frac{\varphi_d(x,\varepsilon)%
}{f(x_1,\ldots,x_d) }\right)
\end{align*}
or, equivalently, $\varphi (x,\varepsilon ) \sim \varphi_d(x,\varepsilon) $
that concludes the proof.

\subsection{Proofs of Proposition~\protect\ref{pro:E[(1-S)^r/2]_asymptotics}, and theorems~\ref{teo:SmBP_final} and \ref{teo:weakening_decay}}

To prove Proposition~\ref{pro:E[(1-S)^r/2]_asymptotics} we need the following Lemma.
\begin{lemma}
	\label{pro:S_convergences} Assume \ref{ass:zero_mean} and \ref{ass:boundedness_of_x}. Then, it is possible to choose $d=d(\varepsilon)$ so that it diverges to infinity as $\varepsilon $ tends to zero and
	\begin{equation}  \label{eq:r_fixed_for_S_convergence}
	\sum_{j\geq d+1}\lambda _{j} = o(\varepsilon^{2}).
	\end{equation}
	Moreover, as $\varepsilon \rightarrow 0$, $S(x,\varepsilon,d)\rightarrow 0$, where the convergence holds almost surely, in the $L^{1}$ norm and hence in probability. 
\end{lemma}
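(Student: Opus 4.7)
The plan is to construct $d(\varepsilon)$ by a direct threshold on the tail $a_d=\sum_{j\ge d+1}\lambda_j$, and then verify the three convergence modes: $L^1$ and in probability via a single first--moment computation, and almost sure via a Borel--Cantelli argument on a dyadic subsequence followed by a monotone interpolation back to continuous $\varepsilon$.

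First, since $X$ is $\mathcal{L}^2_{[0,1]}$--valued with finite second moment, $\Sigma$ is trace class and $a_d\downarrow 0$ as $d\to\infty$. Setting $d(\varepsilon)=\inf\{d\ge 1:a_d\le \varepsilon^4\}$ gives an integer that is non--increasing in $\varepsilon$, diverges as $\varepsilon\to 0$, and satisfies $a_{d(\varepsilon)}\le \varepsilon^4=o(\varepsilon^2)$, which is exactly \eqref{eq:r_fixed_for_S_convergence}. Any rate $o(\varepsilon^2)$ would do for that line alone; the sharper choice $\varepsilon^4$ is deliberate, since it is what makes the Borel--Cantelli step below summable.

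Second, combining \ref{ass:zero_mean} (so that $\mathbb{E}[\theta_j]=0$ and $\mathbb{E}[(\theta_j-x_j)^2]=\lambda_j+x_j^2$) with \ref{ass:boundedness_of_x}, I obtain $\mathbb{E}[S(x,\varepsilon,d(\varepsilon))]=\varepsilon^{-2}\sum_{j\ge d+1}(\lambda_j+x_j^2)\le (1+C_1)\,a_{d(\varepsilon)}/\varepsilon^2\le (1+C_1)\varepsilon^2\to 0$. Since $S\ge 0$, this gives $L^1$--convergence to $0$, and convergence in probability follows from Markov's inequality.

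Third, for almost sure convergence I restrict first to the dyadic sequence $\varepsilon_n=2^{-n}$: Markov's inequality yields $\mathbb{P}(S(x,\varepsilon_n,d(\varepsilon_n))>\eta)\le(1+C_1)2^{-2n}/\eta$, which is summable in $n$, so Borel--Cantelli gives $S(x,\varepsilon_n,d(\varepsilon_n))\to 0$ a.s. To lift this to continuous $\varepsilon\to 0$ I use monotonicity: the random tail $T_d=\sum_{j\ge d+1}(\theta_j-x_j)^2$ is non--increasing in $d$ and $d(\varepsilon)$ is non--increasing in $\varepsilon$; hence, for $\varepsilon\in[\varepsilon_{n+1},\varepsilon_n]$, one has $S(x,\varepsilon,d(\varepsilon))=T_{d(\varepsilon)}/\varepsilon^2\le T_{d(\varepsilon_n)}/\varepsilon_{n+1}^2=4\,S(x,\varepsilon_n,d(\varepsilon_n))$, transferring the a.s. limit from the subsequence to all $\varepsilon\to 0$.

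The only mildly delicate step is this interpolation from the dyadic subsequence to continuous $\varepsilon$; everything else reduces to the trace--class property of $\Sigma$ and one first--moment bound obtained from \ref{ass:boundedness_of_x}.
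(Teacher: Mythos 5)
Your proof is correct, and the construction of $d(\varepsilon)$ together with the $L^1$/in-probability part coincide in substance with the paper's argument: the paper also takes a threshold rule $d=\min\{k:\sum_{j\ge k+1}\lambda_j\le\varepsilon^{2+\delta}\}$ and bounds the first moment of $S$ via $\mathbb{E}[(\theta_j-x_j)^2]=\lambda_j+x_j^2\le(1+C_1)\lambda_j$, which uses \ref{ass:zero_mean} and \ref{ass:boundedness_of_x} exactly as you do. Where you genuinely diverge is the almost-sure step. The paper exploits the monotonicity of $S(x,\varepsilon,\cdot)$ in $d$ to write $\mathbb{P}\bigl(\sup_{j\ge d+1}S(x,\varepsilon,j)\ge k\bigr)=\mathbb{P}\bigl(S(x,\varepsilon,d+1)\ge k\bigr)$ and then invokes the classical sup-criterion for a.s.\ convergence (Shiryaev, Theorem~10.3.1); you instead run Borel--Cantelli along the dyadic subsequence $\varepsilon_n=2^{-n}$ and interpolate back to continuous $\varepsilon$ using the joint monotonicity of the random tail $T_d$ in $d$ and of $d(\varepsilon)$ in $\varepsilon$, at the price of a factor $4$. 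Your route has the merit of treating explicitly the dependence of $S$ on $\varepsilon$ through the $1/\varepsilon^2$ factor, which the paper's sup computation (carried out at fixed $\varepsilon$) leaves implicit; it does require a quantitative tail rate (your $\varepsilon^4$, though any $\varepsilon^{2+\delta}$ would already make the dyadic series summable), whereas the paper's criterion needs only the displayed probability bound and no choice of subsequence. Both arguments, like the paper's, tacitly assume the process is infinite-dimensional (all $\lambda_j>0$) so that the threshold rule actually forces $d(\varepsilon)\to\infty$.
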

\begin{proof}
A possible choice for $d=d(\varepsilon )$ satisfying \eqref{eq:r_fixed_for_S_convergence} can be, for a fixed $\delta >0$, as follow
\begin{equation*}
d=\min \left\{ k\in \mathbb{N}:\sum_{j\geq k+1}\lambda _{j}\leq \varepsilon
^{2+\delta }\right\} ,\qquad \text{\textnormal{for any }}\varepsilon >0.
\end{equation*}%
Such a minimum is well defined since eigenvalues series is convergent.
\\
Let us prove that $S$ converges to zero in probability. For any $k>0$,
by Markov inequality and, thanks to Equation~\eqref{eq:E[W^2]_bounded},
\begin{align}
\mathbb{P}\left( \left\vert S\right\vert >k\right) &= \mathbb{P}%
\left(S>k\right) =\mathbb{P}\left( \frac{1}{\varepsilon^{2}} \sum_{j\geq
d+1} \left(\theta_j - x_j \right)^{2} > k\right)  \notag \\
&\leq \frac{\mathbb{E}\left[ \frac{1}{\varepsilon ^{2}}\sum_{j\geq d+1}
\left(\theta_j - x_j \right)^{2} \right] }{k^{2}} \leq \frac{C_1 }{k^{2}}%
\frac{\sum_{j\geq d+1}\lambda _{j}}{\varepsilon ^{2}}.
\label{eq:rate_probability_convergence_of_S}
\end{align}
Thanks to \eqref{eq:r_fixed_for_S_convergence} we get the convergence in
probability. Since $S=S(x,\varepsilon,d )$ is non--increasing when $d$
increases,
\begin{equation*}
\mathbb{P}\left( \sup_{j\geq d+1}\left\vert S(x,\varepsilon,j )-0\right\vert
\geq k\right) =\mathbb{P}\left( S(x,\varepsilon,d+1 )\geq k\right)
\end{equation*}
holds for any $k>0$ and any $x$. This fact, together with %
\eqref{eq:rate_probability_convergence_of_S}, guarantees the almost sure
convergence of $S$ to zero (e.g. \citealp[Theorem 10.3.1]{shi95}) as $%
\varepsilon$ tends to zero. Moreover, the monotone convergence theorem
guarantees the $L^{1}$ convergence.
\end{proof}

\begin{proof}[{Proof of Proposition~\protect\ref{pro:E[(1-S)^r/2]_asymptotics}%
}]
Note that if $d(\varepsilon)$ satisfies \eqref{eq:r_fixed_for_E(1-S)^r/2_convergence}, then \eqref{eq:r_fixed_for_S_convergence} and Lemma~\ref{pro:S_convergences} hold. For a fixed $\delta>0$, a possible choice of such $d=d(\varepsilon)$ can be
\begin{equation*}
d = \min\left\{k\in\mathbb{N}:\ \ k\!\!\sum_{j\geq k+1}\lambda _{j}\leq
\varepsilon^{2+\delta } \right\},
\end{equation*}
where the minimum is achieved thanks to the eigenvalues hyperbolic decay assumption.
\\
At this stage, note that
\begin{equation*}
0 < \mathbb{E}\left[ \left( 1-S\right) ^{d/2}\mathbb{I}_{\left\{ S<
1\right\}}\right] \le 1
\end{equation*}
then, after some algebra, thanks to Bernoulli inequality (i.e.\@ $(1+s)^r
\ge 1+rs$ for $s\ge -1$ and $r\in\mathbb{R}\setminus (0,1)$), Markov
inequality and Assumption~\eqref{eq:E[W^2]_bounded}, we have (for any $d\ge
2 $)
\begin{align*}
0 \le & 1- \mathbb{E}\left[ \left( 1-S\right) ^{d/2}\mathbb{I}_{\left\{
S< 1\right\}}\right] \le 1- \mathbb{E}\left[ \left( 1-\frac{d}{2}S\right)%
\mathbb{I}_{\left\{ S< 1\right\}} \right] \\
\le & \mathbb{P}(S\ge 1) + \mathbb{E}\left[ \frac{d}{2} S\, \mathbb{I}%
_{\left\{ S< 1\right\}}\right] \le \mathbb{E}\left[ \frac{(d+2)}{%
2\varepsilon ^{2}}\sum_{j\ge d+1}\left(\theta_j - x_j \right)^{2} \right]
\le \frac{C_1 (d+2)}{2\varepsilon ^{2}}\sum_{j\ge d+1}\lambda _{j}.
\end{align*}
Choosing $d$ according to \eqref{eq:r_fixed_for_E(1-S)^r/2_convergence} the
thesis follows.
\end{proof}

\begin{proof}[Proof of Theorem~\ref{teo:SmBP_final}]
	Thanks to hyper--exponentiality \eqref{eq:rate_eigen_conclusion_1}, there exists $d_{0}\in \mathbb{N}$ so that for any $d\geq d_{0}$
	\begin{equation*}
	d\sum_{j\geq d+1}\lambda _{j}< \lambda _{d}.
	\end{equation*}%
	Moreover, there exist $\delta _{1},\delta _{2}\in (0,1)$ (depending on $d$)
	for which, for any $d\geq d_{0}$
	\begin{equation}
	0\leq d\sum_{j\geq d+1}\lambda _{j}\leq b(d,\{\lambda _{j}\}_{j\geq
		d+1},\delta _{1})<B(d,\{\lambda _{j}\}_{j\leq d},\delta _{2})\leq \lambda _{d},  \label{eq:b<B}
	\end{equation}%
	where
	\begin{equation*}
	b(d,\{\lambda _{j}\}_{j\geq d+1},\delta _{1})=\left( d\sum_{j\geq
		d+1}\lambda _{j}\right) ^{1-\delta _{1}},\qquad B(d,\{\lambda _{j}\}_{j\leq
		d},\delta _{2})=\lambda _{d}^{1-\delta
		_{2}} .
	\end{equation*}%
	As instance, for a given $d\geq d_{0}$, fix $\delta _{1}\in (0,1)$ and solve %
	\eqref{eq:b<B} with respect to $\delta _{2}$, that is $\delta _{2}\in \left(
	\min \left\{ 0,\beta (\delta _{1})\right\} ,1\right) $ where $\beta (\delta
	_{1})=1-(1-\delta _{1})\ln \left( d\sum_{j\geq d+1}\lambda _{j}\right) /\ln
	\left( \lambda _{d}\right) $. As a consequence, for any $%
	\varepsilon >0$ and for such a choice of $\delta _{1}$, $\delta _{2}$, the
	following minimum is well--defined
	\begin{equation*}
	d(\varepsilon )=\min \left\{ k\in \mathbb{N}:b(k,\{\lambda _{j}\}_{j\geq
		k+1},\delta _{1})\leq \varepsilon ^{2}\leq B(k,\{\lambda _{j}\}_{j\leq
		k},\delta _{2})\right\} .  
	\end{equation*}%
	This guarantees that the right--hand side of \eqref{eq:errore_globale}
	vanishes as $\varepsilon $ goes to zero.
\end{proof}

To prove Theorem~\ref{teo:weakening_decay} we need the following Lemma.
\begin{lemma}
		Assume \ref{ass:zero_mean} and \ref{ass:boundedness_of_x}. Then, as $\varepsilon \rightarrow 0$,
		\begin{equation}  \label{eq:DH_6.17}
		\extra(x,\varepsilon,d)^{2/d} \to 1, \qquad \textnormal{or,}\qquad \log\left( \extra(x,\varepsilon,d) \right) = o(d).
		\end{equation}
\end{lemma}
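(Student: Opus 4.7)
The plan is to sandwich $\mathcal{R}(x,\varepsilon,d)^{2/d}$ between a concrete lower bound obtained by Jensen's inequality and the trivial upper bound $1$, then invoke the $L^1$ convergence of $S$ already established in Lemma~\ref{pro:S_convergences}.

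First, since $d\ge 2$ the map $y\mapsto y^{2/d}$ is concave on $[0,\infty)$, so Jensen's inequality applied to the non-negative random variable $Y=(1-S)^{d/2}\mathbb{I}_{\{S<1\}}$ gives
\begin{equation*}
\mathcal{R}(x,\varepsilon,d)^{2/d}=\bigl(\mathbb{E}[Y]\bigr)^{2/d}\ge \mathbb{E}\bigl[Y^{2/d}\bigr]=\mathbb{E}\bigl[(1-S)\mathbb{I}_{\{S<1\}}\bigr],
\end{equation*}
where the last equality uses $Y^{2/d}=(1-S)\mathbb{I}_{\{S<1\}}$ (with the convention $0^{2/d}=0$ on $\{S\ge 1\}$). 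Since $\mathcal{R}\le 1$, I then have the two-sided bound $\mathbb{E}[(1-S)\mathbb{I}_{\{S<1\}}]\le \mathcal{R}^{2/d}\le 1$.

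Second, I rewrite the lower bound by elementary manipulation and use $\mathbb{I}_{\{S\ge 1\}}\le S\mathbb{I}_{\{S\ge 1\}}$:
\begin{equation*}
\mathbb{E}\bigl[(1-S)\mathbb{I}_{\{S<1\}}\bigr]=1-\mathbb{P}(S\ge 1)-\mathbb{E}\bigl[S\mathbb{I}_{\{S<1\}}\bigr]\ge 1-\mathbb{E}\bigl[S\mathbb{I}_{\{S\ge 1\}}\bigr]-\mathbb{E}\bigl[S\mathbb{I}_{\{S<1\}}\bigr]=1-\mathbb{E}[S].
\end{equation*}

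Third, with the choice of $d=d(\varepsilon)$ from Lemma~\ref{pro:S_convergences} (so that $\sum_{j\ge d+1}\lambda_j = o(\varepsilon^2)$), the lemma asserts $S\to 0$ in $L^1$ as $\varepsilon\to 0$, i.e.\ $\mathbb{E}[S]\to 0$; by Markov applied to $S$ together with \ref{ass:boundedness_of_x}, this is made quantitative as $\mathbb{E}[S]\le (1+C_1)\varepsilon^{-2}\sum_{j\ge d+1}\lambda_j$. The sandwich then forces $\mathcal{R}(x,\varepsilon,d)^{2/d}\to 1$, and taking logarithms gives $(2/d)\log\mathcal{R}(x,\varepsilon,d)\to 0$, i.e.\ $\log\mathcal{R}(x,\varepsilon,d)=o(d)$, which is the equivalent formulation of \eqref{eq:DH_6.17}.

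The only delicate point is the Jensen step: one must keep track of the indicator carefully so that $Y^{2/d}=(1-S)\mathbb{I}_{\{S<1\}}$ holds pointwise, and rely on $d\ge 2$ (harmless since we let $d\to\infty$ anyway) to guarantee concavity. Everything else is a direct consequence of results already proved, and in particular the lemma does \emph{not} require any decay rate beyond the minimal one needed to choose $d=d(\varepsilon)$ in Lemma~\ref{pro:S_convergences}; this is what makes it applicable both in the hyper-exponential setting of Theorem~\ref{teo:SmBP_final} and in the weaker super-exponential/exponential settings of Theorem~\ref{teo:weakening_decay}.
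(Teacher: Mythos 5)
Your proof is correct, but it takes a genuinely different route from the paper's. The paper also starts from Jensen's inequality, but applies it with the exponents $d/2$ versus $(d+1)/2$ and combines it with the monotone coupling $S_{d+1}\le S_d$, $\mathbb{I}_{\{S_d<1\}}\le\mathbb{I}_{\{S_{d+1}<1\}}$, to conclude that $d\mapsto\extra(x,\varepsilon,d)^{2/d}$ is a non--decreasing sequence with values in $(0,1]$, eventually bounded away from zero; it is a qualitative monotonicity argument and it never produces an explicit rate. You instead apply Jensen once, to the concave map $y\mapsto y^{2/d}$ (valid for $d\ge 2$, which you correctly flag), obtaining the quantitative sandwich $1-\mathbb{E}[S]\le\extra(x,\varepsilon,d)^{2/d}\le 1$, and then close the argument with the $L^1$ convergence of $S$ from Lemma~\ref{pro:S_convergences}. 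This buys you two things the paper's proof does not make explicit: a rate, $1-\extra^{2/d}\le(1+C_1)\varepsilon^{-2}\sum_{j\ge d+1}\lambda_j$, and a clear identification of the minimal hypothesis needed, namely $\sum_{j\ge d+1}\lambda_j=o(\varepsilon^2)$ \emph{without} the extra factor $d$ that appears in Proposition~\ref{pro:E[(1-S)^r/2]_asymptotics} --- which is exactly why the lemma remains usable in the exponential--decay setting of Theorem~\ref{teo:weakening_decay}. Two cosmetic remarks: the step you attribute to ``Markov'' is really just the direct computation $\mathbb{E}[S]=\varepsilon^{-2}\sum_{j\ge d+1}(\lambda_j+x_j^2)$ using \ref{ass:zero_mean} and \ref{ass:boundedness_of_x}, no inequality on tail probabilities is involved; and, like the paper, you are implicitly reading the statement with $d=d(\varepsilon)$ chosen as in Lemma~\ref{pro:S_convergences}, which is the intended quantification but is worth stating since the lemma as written leaves the dependence of $d$ on $\varepsilon$ implicit.
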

\begin{proof}
	Jensen inequality for concave functions (i.e. $\mathbb{E} [f(g)]\le f (\mathbb{E} [g])$ if $f$ is a concave function) guarantees that 
	\begin{align*}
	\mathbb{E}\left[ \left((1-S)\mathbb{I}_{\{S< 1\}}\right)^{\frac{d}{2}} \right]
	= &\ \mathbb{E}\left[ \left((1-S) \mathbb{I}_{\{S< 1\}}\right)^{\frac{d+1}{2}\frac{d}{d+1}} \right] 
	\\
	\le & \left\{ \mathbb{E}\left[ \left((1-S)\mathbb{I}_{\{S< 1\}}\right)^{\frac{d+1}{2}} \right]\right\}^{\frac{d}{d+1}},
	\end{align*}
	noting that $S(x,\varepsilon,d+1) =: S_{d+1} \le S_d := S(x,\varepsilon,d)$ and $\mathbb{I}_{\{S_d< 1\}} \le \mathbb{I}_{\{S_{d+1}< 1\}}$, then
	\begin{equation*}
	\mathbb{E}\left[ \left((1-S_d)\mathbb{I}_{\{S_d< 1\}}\right)^{\frac{d}{2}} \right]
	\le 
	\left\{ \mathbb{E}\left[ \left((1-S_{d+1})1_{\{S_{d+1}< 1\}}\right)^{\frac{d+1}{2}} \right]\right\}^{\frac{d}{d+1}}.
	\end{equation*}
	The latter guarantees that $\mathbb{E}\left[ \left( 1-S\right) ^{d/2}\mathbb{I}_{\left\{ S< 1\right\}}\right]^{2/d}$ is a non--decreasing monotone sequence with respect to $d$ whose values are in $(0,1]$ and eventually bounded away from zero.
\end{proof}

\begin{proof}[Proof of Theorem~\ref{teo:weakening_decay}]
	Given results in Theorem~\ref{prop:SB}, thesis holds using same arguments as
	in \cite[Proof of Theorem~4.2.]{del:hal10}: the idea is to combine together %
	\eqref{eq:DH_6.17}, the Stirling expansion of the Gamma function in $V_d$
	and the (super--)exponential eigenvalues decay.
\end{proof}

\subsection{Proof of Theorem \protect\ref{prop:rate_convergence}}

In what follows, as did in Section~\ref{sec:joint_dist_estimate}, we
simplify the notations dropping the dependence on $d$ for the density
estimators $f_{n}$ and $\widehat{f}_{n}$. Moreover, $C$ denotes a
general positive constant. The proof of Theorem \ref{prop:rate_convergence} uses similar arguments as in \cite{bia:mas12}.
\newline
Since $H_{n}=h_{n}^{2}I$, it holds $K_{H_{n}}\left( u\right)
=h_{n}^{-d}K\left( u\right) $. Consider
\begin{equation*}
S_{n}\left( x\right) =\sum_{i=1}^{n}K\left( \frac{\left\Vert \Pi _{d}\left(
	X_{i}-x\right) \right\Vert }{h_{n}}\right), \qquad \widehat{S}_{n}\left( x\right) =\sum_{i=1}^{n}K\left( \frac{%
	\left\Vert \widehat{\Pi }_{d}\left( X_{i}-x\right) \right\Vert }{h_{n}}%
\right),
\end{equation*}%
then the pseudo-estimator and the estimator are given by
\begin{equation*}
f_{n}\left( x\right) =\frac{S_{n}\left( x\right) }{nh_{n}^{d}}, \qquad \widehat{f}_{n}\left( x\right) =\frac{\widehat{S}%
	_{n}\left( x\right) }{nh_{n}^{d}},
\end{equation*}%
and, hence,
\begin{equation*}
\mathbb{E}\left[ f_{n}\left( x\right) -\widehat{f}_{n}\left( x\right) \right]
^{2}=\frac{1}{\left( nh_{n}^{d}\right) ^{2}}\mathbb{E}\left[ S_{n}\left(
x\right) -\widehat{S}_{n}\left( x\right) \right] ^{2}.
\end{equation*}%
Set $V_{i}=\left\Vert \Pi _{d}\left( X_{i}-x\right) \right\Vert $, $\widehat{V}_{i}=\left\Vert \widehat{\Pi }_{d}\left( X_{i}-x\right) \right\Vert $, consider the events 
\begin{equation*}
A_{i}=\left\{ V_{i}\leq h_{n}\right\} ,\qquad B_{i}=\left\{ \widehat{V}%
_{i}\leq h_{n}\right\},
\end{equation*}%
then we have the decomposition
\begin{align*}
S_{n}\left( x\right) -\widehat{S}_{n}\left( x\right) =& \sum_{i=1}^{n}\left[
K\left( \frac{V_{i}}{h_{n}}\right) -K\left( \frac{\widehat{V}_{i}}{h_{n}}%
\right) \right] \mathbb{I}_{_{A_{i}\cap B_{i}}}+ \\
& +\sum_{i=1}^{n}K\left( \frac{V_{i}}{h_{n}}\right) \mathbb{I}_{_{A_{i}\cap 
		\overline{B}_{i}}}-\sum_{i=1}^{n}K\left( \frac{\widehat{V}_{i}}{h_{n}}%
\right) \mathbb{I}_{_{_{\overline{A}_{i}\cap B_{i}}}}.
\end{align*}
Since $\left( a+b\right) ^{2}\leq 2a^{2}+2b^{2}$,
\begin{align}
\mathbb{E}\left[ S_{n}\left( x\right) -\widehat{S}_{n}\left( x\right) \right]
^{2}\leq & 2\mathbb{E}\left[ \sum_{i=1}^{n}\left( K\left( \frac{V_{i}}{h_{n}}%
\right) -K\left( \frac{\widehat{V}_{i}}{h_{n}}\right) \right) \mathbb{I}%
_{_{A_{i}\cap B_{i}}}\right] ^{2}  \notag \\
& +2\mathbb{E}\left[ \left( \sum_{i=1}^{n}K\left( \frac{V_{i}}{h_{n}}\right) 
\mathbb{I}_{_{A_{i}\cap \overline{B}_{i}}}\right) ^{2}+\left(
\sum_{i=1}^{n}K\left( \frac{\widehat{V}_{i}}{h_{n}}\right) \mathbb{I}_{_{_{%
			\overline{A}_{i}\cap B_{i}}}}\right) ^{2}\right] .
\label{eq:media2_maggioraz}
\end{align}
Consider now the first addend in the right--hand side of \eqref{eq:media2_maggioraz}: Assumption~\ref{ass:kernel} and the fact that $\left\vert V_{i}-\widehat{V}_{i}\right\vert \leq \left\Vert \Pi _{d}-\widehat{\Pi }_{d}\right\Vert _{\infty }\left\Vert X_{i}-x\right\Vert $, where $\left\Vert\cdot \right\Vert _{\infty }$\ denotes the operator norm, lead to
\begin{equation*}
\mathbb{E}\left[ \sum_{i=1}^{n}\left( K\left( \frac{V_{i}}{h_{n}}\right)
-K\left( \frac{\widehat{V}_{i}}{h_{n}}\right) \right) \mathbb{I}%
_{_{A_{i}\cap B_{i}}}\right] ^{2}\leq C\mathbb{E}\left[ \left\Vert \Pi _{d}-%
\widehat{\Pi }_{d}\right\Vert _{\infty }\sum_{i=1}^{n}\left\Vert
X_{i}-x\right\Vert \mathbb{I}_{_{A_{i}\cap B_{i}}}\right] ^{2}.
\end{equation*}
Thanks to the Cauchy-Schwartz inequality we control the previous bound by
\begin{equation}
C\mathbb{E}\left[ \left\Vert \Pi _{d}-\widehat{\Pi }_{d}\right\Vert _{\infty
}^{2}\right] \mathbb{E}\left[ \left( \sum_{i=1}^{n}\left\Vert
X_{i}-x\right\Vert \mathbb{I}_{_{A_{i}\cap B_{i}}}\right) ^{2}\right] .
\label{eq:CS-bound-proj}
\end{equation}%
About the first factor in \eqref{eq:CS-bound-proj}, \citealp[Theorem 2.1 (ii)]{bia:mas12} established that
\begin{equation}
\mathbb{E}\left[ \left\Vert \Pi _{d}-\widehat{\Pi }_{d}\right\Vert _{\infty
}^{2}\right] =O\left( \frac{1}{n}\right) .  \label{eq:bound_proj}
\end{equation}%
Consider now the second term in \eqref{eq:CS-bound-proj}. Thanks to the
Chebyshev's algebraic inequality (see, for instance, \citealp[page 243]{mit:pec:fin93}) and since $\mathbb{E}\left[ \mathbb{I}_{A_{i}\cap B_{i}}\right]
\leq \mathbb{E}\left[ \mathbb{I}_{A_{i}}\right] $, for any $k\geq 1$ it holds%
\begin{equation*}
\mathbb{E}\left[ \left\Vert X-x\right\Vert ^{k}\mathbb{I}_{A_{i}\cap B_{i}}%
\right] \leq \mathbb{E}\left[ \left\Vert X-x\right\Vert ^{k}\right] \mathbb{E%
}\left[ \mathbb{I}_{A_{i}}\right] .
\end{equation*}%
The fact that $\mathbb{E}\left[ \mathbb{I}_{A_{i}}\right] \sim h_{n}^{d}$
and Assumption~\ref{ass:moments_of_X} give
\begin{equation*}
\mathbb{E}\left[ \left\Vert X-x\right\Vert ^{k}\mathbb{I}_{A_{i}\cap B_{i}}%
\right] \leq C\frac{k!}{2}b^{k-2}h_{n}^{d},
\end{equation*}%
with $b>0$. Hence, the Bernstein inequality (see e.g.~\citealp{massart07}) can be
applied: for any $M>0$,
\begin{equation*}
\mathbb{P}\left( \left\vert \sum_{i=1}^{n}\left\Vert X_{i}-x\right\Vert 
\mathbb{I}_{_{A_{i}\cap B_{i}}}-\mathbb{E}\left[ \sum_{i=1}^{n}\left\Vert
X_{i}-x\right\Vert \mathbb{I}_{_{A_{i}\cap B_{i}}}\right] \right\vert \geq
Mnh^{d}\right) \leq \exp \left( -CM^{2}nh^{d}\right) .
\end{equation*}%
This result together with the Borel-Cantelli lemma lead to:%
\begin{equation*}
\sum_{i=1}^{n}\left\Vert X_{i}-x\right\Vert \mathbb{I}_{_{A_{i}\cap
		B_{i}}}\leq Cnh^{d}\ \ \ \ \ \ \ \ \ a.s.
\end{equation*}%
and therefore,%
\begin{equation}
\mathbb{E}\left[ \left( \sum_{i=1}^{n}\left\Vert X_{i}-x\right\Vert \mathbb{I%
}_{_{A_{i}\cap B_{i}}}\right) ^{2}\right] \leq Cn^{2}h^{2d}.
\label{eq:bound_sum2}
\end{equation}%
Finally, combining results \eqref{eq:bound_proj} and \eqref{eq:bound_sum2},
we obtain:%
\begin{equation}
\frac{1}{\left( nh_{n}^{d}\right) ^{2}}\mathbb{E}\left[ \sum_{i=1}^{n}\left(
K\left( \frac{V_{i}}{h_{n}}\right) -K\left( \frac{\widehat{V}_{i}}{h_{n}}%
\right) \right) \mathbb{I}_{_{A_{i}\cap B_{i}}}\right] ^{2}\leq C\frac{1}{%
	nh_{n}^{2}}.  \label{eq:media2_bound_1}
\end{equation}%
Consider now the second addend in the right--hand side of \eqref{eq:media2_maggioraz}. We only look at
\begin{equation}
\mathbb{E}\left[ \sum_{i=1}^{n}K\left( \frac{V_{i}}{h_{n}}\right) \mathbb{I}_{_{A_{i}\cap \overline{B}_{i}}}\right] ^{2},
\label{eq:media2_secondo_termine}
\end{equation}%
because the behaviour of the other addend is similar. Define the sequence $\kappa
_{n}$ so that $\kappa _{n}\rightarrow 0$ as $n\rightarrow \infty $, the
following inclusions hold: 
\begin{align*}
A_{i}\cap \overline{B}_{i}& =\left\{ V_{i}\leq h_{n}\right\} \cap \left\{ 
\widehat{V}_{i}>h_{n}\right\}  \\
& =\left( \left\{ h_{n}\left( 1-\kappa _{n}\right) <V_{i}\leq h_{n}\right\}
\cup \left\{ V_{i}\leq h_{n}\left( 1-\kappa _{n}\right) \right\} \right)
\cap \left\{ \widehat{V}_{i}-V_{i}>h_{n}-V_{i}\right\}  \\
& \subseteq \left\{ h_{n}\left( 1-\kappa _{n}\right) <V_{i}\leq
h_{n}\right\} \cup \left\{ V_{i}\leq h_{n}\left( 1-\kappa _{n}\right) ,%
\widehat{V}_{i}-V_{i}>h_{n}-V_{i}\right\}  \\
& \subseteq \left\{ h_{n}\left( 1-\kappa _{n}\right) <V_{i}\leq
h_{n}\right\} \cup \left\{ \widehat{V}_{i}-V_{i}>\kappa _{n}h_{n}\right\} .
\end{align*}%
The latter inclusion and Assumption~\ref{ass:kernel} allow to control \eqref{eq:media2_secondo_termine} by 
\begin{equation}
\mathbb{E}\left[ \sum_{i=1}^{n}\mathbb{I}_{A_{i}\cap \overline{B}_{i}}\right]
^{2}\leq 2\mathbb{E}\left[ \sum_{i=1}^{n}\mathbb{I}_{\left\{ h_{n}\left(
	1-\kappa _{n}\right) <V_{i}\leq h_{n}\right\} }\right] ^{2}+2\mathbb{E}\left[
\sum_{i=1}^{n}\mathbb{I}_{\left\{ \left\Vert \widehat{\Pi }_{d}-\Pi
	_{d}\right\Vert \left\Vert X_{i}-x\right\Vert >C\kappa _{n}h_{n}\right\} }%
\right] ^{2}.  \label{eq:media_somma_1}
\end{equation}%
About the first term in the right--hand side of the latter, the Cauchy-Schwartz inequality gives
\begin{equation*}
\mathbb{E}\left[ \sum_{i=1}^{n}\mathbb{I}_{\left\{ h_{n}\left( 1-\kappa
	_{n}\right) <V_{i}\leq h_{n}\right\} }\right] ^{2}\leq n^{2}\mathbb{P}\left(
h_{n}\left( 1-\kappa _{n}\right) <V\leq h_{n}\right) .
\end{equation*}%
Since $\mathbb{P}\left( h_{n}\left( 1-\kappa _{n}\right) <V\leq h_{n}\right)
\sim h_{n}^{d}\left( 1-\left( 1-\kappa _{n}\right) ^{d}\right) $, performing
a first order Taylor expansion of $\left( 1-\kappa _{n}\right) ^{d}$ in $%
\kappa _{n}=0$, we get asymptotically
\begin{equation*}
\mathbb{E}\left[ \sum_{i=1}^{n}\mathbb{I}_{\left\{ h_{n}\left( 1-\kappa
	_{n}\right) <V_{i}\leq h_{n}\right\} }\right] ^{2}\leq Cn^{2}h_{n}^{d}\kappa
_{n}.
\end{equation*}%
Similarly, for what concerns the other addend in the right--hand side of %
\eqref{eq:media_somma_1}, we have%
\begin{equation*}
\mathbb{E}\left[ \sum_{i=1}^{n}\mathbb{I}_{\left\{ \left\Vert \widehat{\Pi }%
	_{d}-\Pi _{d}\right\Vert \left\Vert X_{i}-x\right\Vert >C\kappa
	_{n}h_{n}\right\} }\right] ^{2}\leq n^{2}\mathbb{P}\left( \left\Vert 
\widehat{\Pi }_{d}-\Pi _{d}\right\Vert \left\Vert X-x\right\Vert >C\kappa
_{n}h_{n}\right) .
\end{equation*}%
Thanks to the Markov inequality, \citealp[Theorem 2.1 (iii)]{bia:mas12} and
Assumption~\ref{ass:moments_of_X}, it follows%
\begin{equation*}
\mathbb{P}\left( \left\Vert \widehat{\Pi }_{d}-\Pi _{d}\right\Vert
\left\Vert X-x\right\Vert >C\kappa _{n}h_{n}\right) =O\left( \frac{1}{%
	n^{1/2}h_{n}\kappa _{n}}\right) .
\end{equation*}%
Combining the previous results we obtain:%
\begin{equation*}
\frac{1}{\left( nh_{n}^{d}\right) ^{2}}\mathbb{E}\left[ \left(
\sum_{i=1}^{n}K\left( \frac{V_{i}}{h_{n}}\right) \mathbb{I}_{_{A_{i}\cap 
		\overline{B}_{i}}}\right) \right] ^{2}=O\left( \frac{\kappa _{n}}{h_{n}^{d}}%
\right) +O\left( \frac{1}{n^{1/2}h_{n}\kappa _{n}}\right) .
\end{equation*}%
If we choose $\kappa _{n} = \left( n^{5/2}h_{n}^{2d}\right) ^{-1/2}\ $with$\ \
n^{5/4}h_{n}^{d}\rightarrow \infty $, as $n\rightarrow \infty $, we obtain: 
\begin{equation}
\mathbb{E}\left[ \left( \sum_{i=1}^{n}K\left( \frac{V_{i}}{h_{n}}\right) 
\mathbb{I}_{_{A_{i}\cap \overline{B}_{i}}}\right) ^{2}+\left(
\sum_{i=1}^{n}K\left( \frac{\widehat{V}_{i}}{h_{n}}\right) \mathbb{I}_{_{_{%
			\overline{A}_{i}\cap B_{i}}}}\right) ^{2}\right] \leq C\frac{1}{%
	n^{5/4}h_{n}^{2d}}.  \label{eq:media2_bound_2}
\end{equation}%
In conclusion, \eqref{eq:media2_bound_1} and \eqref{eq:media2_bound_2} lead
to:%
\begin{equation*}
\frac{1}{\left( nh_{n}^{d}\right) ^{2}}\mathbb{E}\left[ S_{n}\left( x\right)
-\widehat{S}_{n}\left( x\right) \right] ^{2}=O\left( \frac{1}{nh_{n}^{2}}%
\right) +O\left( \frac{1}{n^{5/4}h_{n}^{2d}}\right) .
\end{equation*}%
Choose the optimal bandwidth \eqref{eq:optimal_bandwidth} and $p>2\vee 3d/10$, then, as $n$ goes to infinity, the first addend becomes negligible compared to the second one that turns to be $O\left( n^{-\left(10p-3d\right) /4\left( 2p+d\right) }\right) $. Moreover, a direct computation shows that such bound is definitively negligible when compared to the ``optimal bound'' $n^{-2p/\left( 2p+d\right) }$, for any $p>2\vee 3d/2$ and $d\geq 1$. This concludes the proof.

\paragraph{Acknowledgements}

%
The authors are grateful to P.~Vieu for discussions held while visiting University Paul Sabatier in Toulouse whose hospitality is appreciated. E.~Bongiorno thanks G.~Gonz\'{a}lez--Rodr\'{\i}guez for the fruitful discussions they had during a visit of the author at Universidad de Oviedo (towards which he is grateful for the kind hospitality).
The authors are members of the Gruppo Nazionale per l'Analisi Matematica, la Probabilit\`{a} e le loro Applicazioni (GNAMPA) of the Istituto Nazionale di Alta Matematica (INdAM) that partially funded this work.

\DeclareRobustCommand{\VAN}[3]{#2}
\bibliographystyle{plain}
\bibliography{SmBProb_biblio}

\end{document}